\theoremstyle{plain}
\newtheorem*{thm*}{Theorem}
\newtheorem{thm}{Theorem}
\newtheorem{prop}[thm]{Proposition}
\newtheorem{lem}[thm]{Lemma}
\newtheorem{cor}[thm]{Corollary}
\theoremstyle{definition}
\newtheorem{defn}[thm]{Definition}
\newtheorem{claim}{Claim}[thm]
\newtheorem{question}{Question}
\newtheorem*{question*}{Question}
\newtheorem{example}{Example}
\newcommand{\intr}{\mathrm{Int}}
\begin{document}

\title[Accessibility of countable sets]{Accessibility of countable sets\\in plane embeddings of arc-like continua}
\author{Ana Anu\v{s}i\'{c} \and Logan C. Hoehn}
\date{\today}

\address{Nipissing University, Department of Computer Science \& Mathematics, 100 College Drive, Box 5002, North Bay, Ontario, Canada, P1B 8L7}
\email{anaa@nipissingu.ca}
\email{loganh@nipissingu.ca}

\thanks{This work was supported by NSERC grant RGPIN-2019-05998}

\subjclass[2020]{Primary 54F15, 54C25; Secondary 54F50, 37B45, 37E30}
\keywords{Plane embeddings; accessible point; arc-like continuum; Knaster continuum; attractor}

\begin{abstract}
We consider the problem of finding embeddings of arc-like continua in the plane for which each point in a given subset is accessible.  We establish that, under certain conditions on an inverse system of arcs, there exists a plane embedding of the inverse limit for which each point of a given countable set is accessible.  As an application, we show that for any Knaster continuum $K$, and any countable collection $\mathcal{C}$ of composants of $K$, there exists a plane embedding of $K$ in which every point in the union of the composants in $\mathcal{C}$ is accessible.  We also exhibit new embeddings of the Knaster buckethandle continuum $K$ in the plane which are attractors of plane homeomorphisms, and for which the restriction of the plane homeomorphism to the attractor is conjugate to a power of the standard shift map on $K$.
\end{abstract}

\maketitle

\section{Introduction}
\label{sec:intro}

In this paper we study a generalized version of the Nadler-Quinn problem, which was recently solved in the affirmative by the authors and A.\ Ammerlaan in \cite{ammerlaan-anusic-hoehn2024-2}.  The Nadler-Quinn problem \cite{nadler1972} asked whether for every arc-like continuum $X$, and every $x \in X$, there exists a plane embedding $\Omega \colon X \to \mathbb{R}^2$ of $X$ such that $\Omega(x)$ is an accessible point of $\Omega(X)$.  Recall that a continuum is called \emph{arc-like} if it is an inverse limit on arcs, and a point $x$ of a plane continuum $X$ is called \emph{accessible} if there is an arc $A$ in the plane such that $A \cap X = \{x\}$.

In \cite{ammerlaan-anusic-hoehn2023}, we define a ``radial departure'' of an interval map with respect to a point, and then show that if no bonding map has a negative radial departure with respect to $x$, then there is a plane embedding $\Omega \colon X \to \mathbb{R}^2$ of $X$ such that $\Omega(x)$ is an accessible point of $\Omega(X)$ (a similar condition was given in \cite{anusic-bruin-cinc2020}, using the term ``zigzags'').  Then, in \cite{ammerlaan-anusic-hoehn2024,ammerlaan-anusic-hoehn2024-2} we show that, given an arc-like continuum $X$ and $x \in X$, it is possible to find an appropriate inverse limit representation of $X$ such that the bonding maps do not have negative radial departures with respect to $x$, thus solving the Nadler-Quinn problem.

We can naturally generalize the problem as follows.  Given an arc-like continuum $X$ and a finite, or countably infinite, set $Z \subset X$, is there a plane embedding $\Omega \colon X \to \mathbb{R}^2$ such that each point of $\Omega(Z)$ is an accessible point of $\Omega(X)$?  In this paper, we give conditions (see Proposition~\ref{prop:embed ctble set accessible}) on the bonding maps of an inverse system on arcs which allow an existence of such an embedding of the inverse limit, phrased in terms of ``removable visors'', which generalize the ``no negative radial departures'' condition from \cite{ammerlaan-anusic-hoehn2023}.

We apply the results to the particular case where $X$ is a Knaster continuum, i.e.\ an inverse limit on intervals in which the bonding maps are open (and non-monotone).  It is known that there are uncountably many non-equivalent embeddings of each Knaster continuum in the plane \cite{mayer1983}, where two plane embeddings $\Omega_1,\Omega_2$ are \emph{equivalent} if there exists a plane homeomorphism $H \colon \mathbb{R}^2 \to \mathbb{R}^2$ such that $\Omega_2 = H \circ \Omega_1$.  Also, for any composant $\kappa$ of a Knaster continuum $K$, there exists a plane embedding $\Omega \colon K \to \mathbb{R}^2$ of $K$ for which $\Omega(\kappa)$ is \emph{fully accessible} \cite{mahavier1989,  anusic-bruin-cinc2017}, meaning that each point in $\Omega(\kappa)$ is an accessible point of $\Omega(K)$.  In \cite{debski-tymchatyn1993}, a complete classification of accessible sets and prime ends of certain, so-called ``regular'', embeddings of Knaster continua is given.  In such plane embeddings, there are at most two composants which are fully accessible.  Consequently, the following question was asked:

\begin{question*}[Question E. in \cite{debski-tymchatyn1993}]
Does there exist an embedding of a Knaster continuum in the plane for which the prime end structure is different than one of the possible structures in regular embeddings?
\end{question*}

We answer this question in the affirmative in Section~\ref{sec:knaster embeddings}, by giving plane embeddings of Knaster continua for which the set of accessible points is larger than those for the regular embeddings of \cite{debski-tymchatyn1993}.  In fact, we produce embeddings of Knaster continua in the plane for which the set of accessible points is ``as large as possible'', in the following sense.

In \cite[Lemma A]{debski-tymchatyn1993}, the following result is proved: Let $K \subset \mathbb{R}^2$ be a Knaster continuum embedded in the plane, and let $x \in K$ be a point which is not an endpoint of $K$.  If $x$ is an accessible point of $K$, then the composant of $x$ is fully accessible.  On the other hand, Mazurkiewicz \cite{mazurkiewicz1929} proved that if $X \subset \mathbb{R}^2$ is any indecomposable continuum embedded in the plane, then there are at most countably many composants of $X$ which contain more than one accessible point.  Therefore, since any Knaster continuum has at most two endpoints, it follows that the ``largest'' possible set of accessible points for a Knaster continuum in the plane is the union of countably infinitely many composants.  In Theorem~\ref{thm:knaster ctble composants}, we prove that there do in fact exist embeddings for which the set of accessible points has this form; in fact, we show that for any countable collection of composants of any Knaster continuum $K$, there exists a plane embedding of $K$ in which each composant in the collection is fully accessible.

We further focus on the special case of the \emph{bucket-handle} continuum, which is the inverse limit on the interval with a single bonding map which is the tent map $T_2$, as in Definition~\ref{def:Tm} below.  It is well-known (the original construction is due to Barge and Martin \cite{barge-martin1990}, using Brown's theorem \cite{brown1960}, see also \cite{boyland-carvalho-hall2021}) that, when an arc-like continuum $X$ is represented as an inverse limit with a single bonding map $f \colon I \to I$, there is a plane embedding $\Omega \colon X \to \mathbb{R}^2$ for which $\Omega(X)$ is the global attractor of a plane homeomorphism $H \colon \mathbb{R}^2 \to \mathbb{R}^2$, such that $H {\restriction}_{\Omega(X)}$ is conjugate to the shift homeomorphism  $\widehat{f} \colon X \to X$,
\[ \widehat{f}(\langle x_1,x_2,\ldots \rangle) = \langle f(x_1),x_1,x_2,\ldots \rangle .\]

In Example~\ref{ex:knaster attractor}, we show that for every $n \in \mathbb{N}$, there is a plane embedding $\Omega \colon K \to \mathbb{R}^2$ of the bucket-handle continuum $K$ such that $\Omega(K)$ has at least $n$ fully-accessible composants.  The embeddings are constructed using a slight variation of the Barge-Martin construction \cite{barge-martin1990}, so that there is a plane homeomorphism $H \colon \mathbb{R}^2 \to \mathbb{R}^2$ such that $\Omega(K)$ is the global attractor of $H$, and $H {\restriction}_{\Omega(K)}$ is conjugate to the $k$-th iterate of the standard shift-homeomorphism on $K$, for some $k > 1$.  It is important to note that $k > 1$ here.  There are, up to equivalence, only two known plane embeddings $\Omega \colon K \to \mathbb{R}^2$ for which $\Omega(K)$ is the global attractor of a plane homeomorphism $H$ such that $H {\restriction}_{\Omega(K)}$ is conjugate to the standard shift-homeomorphism on $K$; one is orientation-preserving (studied in detail in \cite{brucks-diamond1995}), and one is orientation-reversing (see \cite{bruin1999}).  Phil Boyland has asked\footnote{Personal communication.} whether those are the only two such embeddings, up to equivalence.

The paper is organized as follows.  After giving preliminary definitions in Section~\ref{sec:prelim}, we introduce the notion of (removable) visors in Section~\ref{sec:visors}, and prove the general embedding result for arc-like continua, Proposition~\ref{prop:embed ctble set accessible}.  After recalling some basic properties on Knaster continua in Section~\ref{sec:knaster}, we find alternative inverse limit representations of Knaster continua in Section~\ref{sec:tent factorization}, which are used in Section~\ref{sec:knaster embeddings} to prove Theorem~\ref{thm:knaster ctble composants}.  Finally, we present our new dynamical embeddings of the bucket-handle continuum in Example~\ref{ex:knaster attractor}.  The last section of the paper includes discussion and open questions.

The authors would like to thank Wayne Lewis for helpful discussions on topics related to the subject of this paper.

\section{Preliminaries}
\label{sec:prelim}

By a \emph{map} we mean a continuous function.  We denote by $I$ the unit interval $[0,1]$.  An \emph{arc} is a space homeomorphic to $I$.  If $f \colon I \to I$ is a map, we call it \emph{piecewise-linear} if there exists $n \in \mathbb{N}$ and $0 = c_0 < c_1 < \cdots < c_n = 1$ such that $f$ is linear on $[c_{i-1},c_{i}]$ for each $i \in \{1,\ldots,n\}$.

A \emph{continuum} is a compact, connected, metric space.  A continuum is called \emph{non-degenerate} if it contains more than one point.  A \emph{subcontinuum} of a continuum $X$ is a subspace of $X$ which is a continuum.  A subcontinuum of $X$ is \emph{proper} if it not equal to $X$.  A continuum is \emph{indecomposable} if it is not a union of two proper, non-degenerate subcontinua.  Let $X$ be a continuum and $x \in X$.  The \emph{composant} of $x$ in $X$ is the union of all proper subcontinua of $X$ which contain $x$.  If $X$ is indecomposable, then it has uncountably many composants, which are mutually disjoint, and each of them is dense in $X$.

Let $X \subset \mathbb{R}^2$ be a plane continuum and $x \in X$.  We say that $x$ is \emph{accessible} if there is an arc $A \subset \mathbb{R}^2$ such that $A \cap X = \{x\}$.

For a sequence $X_n$, $n \geq 1$, of continua, and a sequence $f_n \colon X_{n+1} \to X_{n}$, $n \geq 1$, of maps, the sequence
\[ \left \langle X_{n},f_n \right \rangle_{n \geq 1} = \left \langle X_1,f_1,X_2,f_2,X_3,\ldots \right \rangle \]
is called an \emph{inverse system}.  The spaces $X_n$ are called \emph{factor spaces}, and the maps $f_n$ are called \emph{bonding maps}.  The \emph{inverse limit} of an inverse system $\left \langle X_n,f_n \right \rangle_{n \geq 1}$ is the space
\[ \varprojlim \left\langle X_n,f_n \right\rangle_{n \geq 1} = \{(x_1,x_2,\ldots): f_n(x_{n+1}) = x_n \textrm{ for each } n \geq 1\} \subseteq \prod_{n \geq 1} X_n ,\]
equipped with the subspace topology inherited from the product topology on $\prod_{n \geq 1} X_n$.  It is also a continuum.  When the index set is clear, we will use a shortened notation $\varprojlim \left\langle X_n,f_n \right\rangle$.  If all factors spaces are equal to a single continuum $X$, and all bonding maps are equal to a single map $f \colon X \to X$, the inverse limit will be denoted by $\varprojlim \left\langle X,f \right\rangle$.  Note that the inverse limit representation of a continuum is not unique, for example:
\begin{itemize}
\item (Dropping finitely many coordinates) If $n_0 \geq 1$, then $\varprojlim \left\langle X_n,f_n \right\rangle_{n \geq 1} \approx \varprojlim \left\langle X_n,f_n \right\rangle_{n \geq n_0}$.
\item (Composing bonding maps) If $1 = n_1 < n_2 < n_3 < \ldots$ is an increasing sequence of integers, then $\varprojlim \left\langle X_n,f_n \right\rangle_{n \geq 1} \approx \varprojlim \left\langle X_{n_i},f_{n_i} \circ \cdots \circ f_{n_{i+1}-1} \right\rangle_{i \geq 1}$.
\end{itemize}

A map $f \colon X \to X$ is a \emph{near-homeomorphism} if it is a uniform limit of a sequence of homeomorphisms of $X$ to itself.  According to a result of Brown \cite{brown1960}, for any sequence $f_n \colon X \to X$, $n \geq 1$, of near-homeomorphisms, the inverse limit $\varprojlim \left\langle X,f_n \right\rangle$ is homeomorphic to $X$.

A continuum $X$ is called \emph{arc-like} if $X \approx \varprojlim \left\langle I,f_n \right\rangle$ for some sequence of maps $f_n \colon I \to I$, $n \geq 1$.  By \cite{brown1960}, we can without loss of generality assume that these maps $f_n$ are all piecewise-linear.  Every arc-like continuum can be embedded in the plane \cite{bing1951}.

\section{Visors and plane embeddings}
\label{sec:visors}

Throughout this section, let $f \colon I \to I$ be a piecewise-linear map, and let $Z = \{z_1,\ldots,z_n\} \subset I$ be a set of $n$ points such that
\[ z_1 < z_2 < \cdots < z_n  \quad\textrm{and}\quad  f(z_1) < f(z_2) < \cdots < f(z_n) .\]

\begin{defn}
\label{def:visor}
Given $j \in \{1,\ldots,n\}$, a \emph{$Z$-visor} for $z_j$ under $f$ is a point $v \in I$ such that $v < z_j$, $[v,z_j) \cap Z = \emptyset$, and $f(v) > f(z_j)$.  Briefly, we call a point $v \in I$ a $Z$-visor (or just a visor) under $f$ if there exists $j \in \{1,\ldots,n\}$ such that $v$ is a $Z$-visor for $z_j$ under $f$.

Let $a,b,c \in I$, and let $v$ be a visor for $z_j$ under $f$.  We say the triple $\langle a,b,c \rangle$ \emph{removes} the visor $v$ if:
\begin{enumerate}
\item $a < v < b$ and $z_j < c$;
\item $(a,b) \cap Z = \emptyset$;
\item Either $a = 0$ and $0 \notin Z$, or $f(a) \leq f(x)$ for all $x \in [a,b]$;
\item Either $c = 1$, or $f(c) \geq f(x)$ for all $x \in [a,b]$; and
\item $f(b) \leq f(x)$ for all $x \in [b,c]$.
\end{enumerate}
The visor $v$ is called \emph{removable} if there exist $a,b,c \in I$ such that $\langle a,b,c \rangle$ removes $v$.
\end{defn}

Figure~\ref{fig:visors} depicts some examples of removable and non-removable visors.

\begin{figure}
\begin{center}
\includegraphics{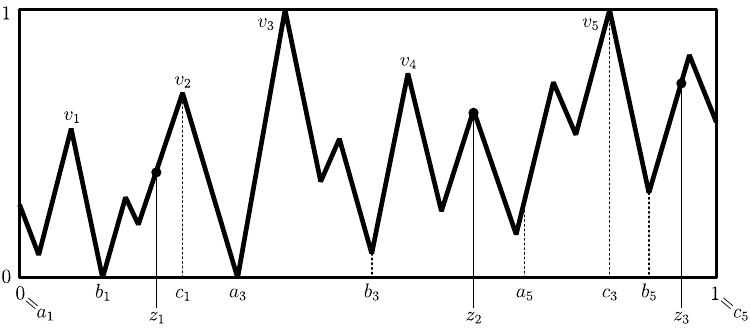}
\end{center}

\caption{An example of a map $f \colon I \to I$ and a set $Z = \{z_1,z_2,z_3\}$, with five $Z$-visors $v_1,\ldots,v_5$ displayed.  For each $i \in \{1,3,5\}$, $v_i$ is removable, and a triple $\langle a_i,b_i,c_i \rangle$ which removes $v_i$ is displayed.  The visors $v_2$ and $v_4$ are not removable.}
\label{fig:visors}
\end{figure}

One sufficient condition for a $Z$-visor $v$ under $f$ to be removable is if there exists $a,b \in I$ such that $a < v < b$, $(a,b) \cap Z = \emptyset$, and $f(a) = f(b) = 0$.  Indeed, in this case, the triple $\langle a,b,1 \rangle$ removes $v$.  This sufficient condition will be all that is needed for the examples we present in Section~\ref{sec:knaster embeddings} below.

We pause briefly to compare the concept of visors to that of negative radial departures, defined in \cite{ammerlaan-anusic-hoehn2023}.  In that paper, the focus was on maps $f \colon [-1,1] \to [-1,1]$ with $f(0) = 0$.  A \emph{negative radial departure} for such a map $f$ is a pair $\langle x_1,x_2 \rangle$ with $x_1 < 0 < x_2$ and $f((x_1,x_2)) = (f(x_2),f(x_1))$.  If we momentarily adapt, in a natural way, the definition of visors above to such a map $f \colon [-1,1] \to [-1,1]$, then it can easily be seen that there exists a non-removable $\{0\}$-visor under $f$ if and only if there exists a negative radial departure of $f$.

\begin{lem}
\label{lem:av bv}
Let $f$ and $Z$ be as above.  Let $v$ be a visor under $f$ which is removable.  There is a unique minimal interval $[a_v,b_v]$ for which there exists $c$ such that $\langle a_v,b_v,c \rangle$ removes $v$.
\end{lem}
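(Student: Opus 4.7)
The plan is to exhibit $[a_v,b_v]$ as the intersection of all intervals $[a,b]$ that appear in a triple removing $v$, and to verify this intersection is itself the first two coordinates of such a triple. The key enabling fact is a combination lemma: given two triples $\langle a_1,b_1,c_1\rangle$ and $\langle a_2,b_2,c_2\rangle$ that both remove $v$, the triple $\langle a',b',c'\rangle$ with $a' = \max(a_1,a_2)$, $b' = \min(b_1,b_2)$, and $c' = c_{i^*}$ where $i^*$ is the index achieving $b'$, also removes $v$. The verification reduces to observing that $[a',b'] \subseteq [a_i,b_i]$ for both $i \in \{1,2\}$, so conditions (2) and (3) of Definition~\ref{def:visor} transfer from the triple achieving $a'$, while (4) and (5) transfer from the triple achieving $b'$; condition (1) holds since $a' < v < b'$ and $z_j < c'$ (or $c' = 1$).

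With this lemma in hand, I would set
\[ a_v = \sup\{a : \langle a,b,c\rangle \text{ removes } v \text{ for some } b,c\}, \quad b_v = \inf\{b : \langle a,b,c\rangle \text{ removes } v \text{ for some } a,c\}. \]
Combining pairs of near-extremal triples yields a sequence $\langle a_n,b_n,c_n\rangle$ removing $v$ with $a_n \to a_v$ and $b_n \to b_v$; after passing to a subsequence one may assume $c_n \to c^*$ in $[0,1]$. The closed conditions (2), (3), (4), (5) then pass to the candidate triple $\langle a_v, b_v, c^*\rangle$ by continuity of $f$ and the containment $[a_v,b_v] \subseteq [a_n,b_n]$.

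The main obstacle lies in the strict inequalities of condition (1); one must rule out $a_v = v$, $b_v = v$, and (when $c^* \neq 1$) $c^* = z_j$. Each degeneracy contradicts the defining property $f(v) > f(z_j)$ of the visor. For instance, $a_v = v$ would force $a_n \to v > 0$, and then conditions (3) and (5) give $f(a_n) \leq f(b_n) \leq f(z_j)$ (the last step using $z_j \in [b_n,c_n]$), contradicting $f(a_n) \to f(v) > f(z_j)$; the case $b_v = v$ is analogous, and $c^* = z_j$ is excluded by condition (4), which yields $f(c_n) \geq f(v) > f(z_j)$. Once $\langle a_v, b_v, c^*\rangle$ is confirmed to remove $v$, the minimality (and hence uniqueness) of $[a_v,b_v]$ is immediate: by construction $[a_v,b_v]$ is contained in every good interval.
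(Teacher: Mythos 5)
Your proof is correct and takes essentially the same route as the paper's: both establish a directedness/combination property for removing triples and then use a compactness argument to extract a minimal interval.  The only stylistic difference is that the paper replaces the strict inequalities of condition (1) with an equivalent non-strict condition involving auxiliary points $r,s,t$ so that closedness of the set of removing triples is immediate, whereas you pass to a limit directly and rule out the degeneracies $a_v=v$, $b_v=v$, and $c^*\le z_j$ by hand---both handlings are sound.
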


\begin{proof}
Let $j \in \{1,\ldots,n\}$ be such that $v$ is a visor for $z_j$ under $f$.  Let $r = \max \{x < v: f(x) = f(z_j)\}$ or $r = 0$ if no such $x < v$ exists, let $s = \min \{x > v: f(x) = f(z_j)\}$, and let $t = \min \{x > z_j: f(x) = f(v)\}$ or $t = 1$ if no such $x > z_j$ exists.  Then it is easy to see that a triple $\langle a,b,c \rangle$ removes $v$ if and only if
\begin{enumerate}[label=(\arabic{*}$'$)]
\item $a \leq r$, $s \leq b$, and $t \leq c$,
\end{enumerate}
and also (2), (3), (4), and (5) from Definition~\ref{def:visor} hold.  In this form, it is evident that the set $\{\langle a,b,c \rangle \in I^3: \langle a,b,c \rangle$ removes $v\}$ is a closed subset of $I^3$.  Hence, its projection
\[ S = \{\langle a,b \rangle \in I^2: \textrm{ there exists $c \in I$ such that $\langle a,b,c \rangle$ removes $v$} \} \]
is closed in $I^2$.  To prove the Lemma, it suffices to prove that this set $S$ is ``directed'', in the sense that for any $\langle a_1,b_1 \rangle, \langle a_2,b_2 \rangle \in S$, there exists $\langle a_3,b_3 \rangle \in S$ such that $[a_3,b_3] \subseteq [a_1,b_1] \cap [a_2,b_2]$.

Let $\langle a_1,b_1 \rangle, \langle a_2,b_2 \rangle \in S$, and suppose that $a_1 < a_2$ and $b_1 < b_2$.  Let $c_1,c_2 \in I$ be such that $\langle a_1,b_1,c_1 \rangle$ removes $v$ and $\langle a_2,b_2,c_2 \rangle$ removes $v$.  We claim that both $\langle a_2,b_1,c_1 \rangle$ and $\langle a_2,b_1,c_2 \rangle$ remove $v$.  Indeed, let $c \in \{c_1,c_2\}$.  We have $a_2 < v < b_1$ and $z_j < c$, and $(a_2,b_1) \cap Z = \emptyset$.  Also, $f(a_2) \leq f(x)$ for all $x \in [a_2,b_1] \subseteq [a_2,b_2]$; either $c = 1$ or $f(c) \geq f(x)$ for all $x \in [a_2,b_1] = [a_1,b_1] \cap [a_2,b_2]$; and $f(b_1) \leq f(x)$ for all $x \in [b_1,c]$, since $f(b_1) \leq f(x)$ for all $x \in [b_1,c_1]$ and $f(b_1) \leq f(b_2) \leq f(x)$ for all $x$ in $[b_2,c_2]$.  Thus $\langle a_2,b_1 \rangle \in S$, as desired.
\end{proof}

\begin{lem}
\label{lem:av bv clear}
Let $f$ and $Z$ be as above.  Let $v$ be a visor under $f$ which is removable.  Then (1) either $a_v = 0$ or $f(a_v) = f(b_v)$, and (2) $f(x) > f(b_v)$ for each $a_v < x < b_v$.
\end{lem}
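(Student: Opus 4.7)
The plan is to prove part~(2) first and then derive part~(1) from it; both proceed by contradiction from the minimality of $[a_v, b_v]$ afforded by Lemma~\ref{lem:av bv}. Fix $c$ with $\langle a_v, b_v, c \rangle$ removing $v$, and let $j$ be the index for which $v$ is a visor for $z_j$. The one auxiliary fact needed throughout is the inequality $f(v) > f(b_v)$: since $v < z_j$ and $(a_v, b_v) \cap Z = \emptyset$ while $v < b_v$, we have $z_j \geq b_v$; combined with $z_j < c$, condition~(5) gives $f(z_j) \geq f(b_v)$, and the visor condition $f(v) > f(z_j)$ closes the inequality.

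For part~(2), suppose some $x^* \in (a_v, b_v)$ satisfies $f(x^*) \leq f(b_v)$; by the auxiliary fact, $x^* \neq v$. The plan is to dichotomize on which side of $v$ admits such a witness. If one exists with $x^* > v$, let $b^*$ be the leftmost point at which $f$ attains $\min_{[v, b_v]} f$; then $b^* > v$ because $f(v) > f(b_v) \geq f(b^*)$, and $b^* < b_v$ because leftmost-ness together with $f(x^*) \leq f(b_v)$ at some $x^* \in (v, b_v)$ forces it. The triple $\langle a_v, b^*, c \rangle$ then removes $v$---condition~(5) holds on $[b^*, b_v]$ by the argmin property and on $[b_v, c]$ by the original control at $b_v$---so $[a_v, b^*] \subsetneq [a_v, b_v]$ contradicts minimality. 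Otherwise $f(y) > f(b_v)$ for all $y \in (v, b_v)$; in that case let $a^* = \max\{x \in [a_v, v] : f(x) \leq f(b_v)\}$. This set contains $x^*$, so $a^* \geq x^* > a_v$; and maximality together with $f(v) > f(b_v)$ places $a^* \in (a_v, v)$ and, by continuity, yields $f(a^*) = f(b_v)$. Then $\langle a^*, b_v, c \rangle$ removes $v$---condition~(3) at $a^*$ reduces to $f(x) \geq f(a^*) = f(b_v)$ on $[a^*, v]$ (by maximality of $a^*$) and on $[v, b_v]$ (by the current subcase hypothesis)---and again $a^* > a_v$ contradicts minimality.

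Part~(1) now follows immediately: if $a_v > 0$, condition~(3) gives $f(a_v) \leq f(b_v)$; a strict inequality would, by continuity, provide points $x$ just to the right of $a_v$ with $x \in (a_v, b_v)$ and $f(x) < f(b_v)$, directly contradicting part~(2). The principal obstacle is the asymmetry between the endpoints in Definition~\ref{def:visor}: condition~(5) gives uniform control to the right of $b_v$, whereas condition~(3) is disjunctive, so when $a_v = 0$ one cannot assume a global lower bound on $f$ near $a_v$. The dichotomy in part~(2) sidesteps this by only moving the left endpoint in the subcase where $f > f(b_v)$ already holds on $(v, b_v)$, ensuring that the new left endpoint $a^*$ automatically satisfies condition~(3) on $[a^*, b_v]$.
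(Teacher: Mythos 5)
Your proof is correct, and the core mechanism is the same as the paper's: exploit the minimality of $[a_v,b_v]$ from Lemma~\ref{lem:av bv} by replacing one endpoint to produce a strictly smaller removing interval. Where you differ is in organization. The paper first establishes the auxiliary fact that $f \geq f(b_v)$ on $[v,b_v]$, then proves part~(1) by pulling $a_v$ rightward to the last level-$f(b_v)$ crossing, and finally proves part~(2) by noting that an interior point with $f(x)=f(b_v)$ allows one endpoint or the other to move inward. You instead prove part~(2) directly via a clean dichotomy on which side of $v$ a witness $x^*$ with $f(x^*) \leq f(b_v)$ lies, and then obtain part~(1) as a one-line corollary of part~(2) combined with condition~(3) and continuity. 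One advantage of your version is that, by explicitly choosing the replacement endpoint as an argmin of $f{\restriction}_{[v,b_v]}$ (right case) or as the largest sublevel point in $[a_v,v]$ (left case), you make verification of condition~(5) (and condition~(3)) fully transparent; the paper asserts ``it is straightforward to see that $\langle a_v,x,c\rangle$ removes $v$'' for an arbitrary witness $x$, which, as written, glosses over the fact that condition~(5) can fail for a generic such $x$ and implicitly requires exactly the kind of extremal choice you make. Your remark about the endpoint asymmetry in Definition~\ref{def:visor} and why moving the left endpoint only in the second subcase avoids the $a_v=0$ case is also accurate.
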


\begin{proof}
Let $c \in I$ be such that $\langle a_v,b_v,c \rangle$ removes $v$.  We claim that $f(b_v) \leq f(x)$ for all $x \in [v,b_v]$.  Indeed, suppose for a contradiction that there exists $x \in [v,b_v)$ such that $f(x) < f(b_v)$.  It is straightforward to see that $\langle a_v,x,c \rangle$ removes $v$.  But this contradicts the minimality of $[a_v,b_v]$.

Now, suppose $a_v \neq 0$.  Since $f(a_v) \leq f(b_v)$ and $f(v) > f(b_v)$, there exists some $a' \in [a_v,v)$ such that $f(a') = f(b_v)$.  If we take $a'$ to be the maximal such value, then $f(a') \leq f(x)$ for all $x \in [a',v]$.  Also, $f(a') = f(b_v) \leq f(x)$ for all $x \in [v,b_v]$ by the above paragraph.  It follows that $\langle a',b_v,c \rangle$ removes $v$.  Therefore, $a' = a_v$ by minimality of $[a_v,b_v]$, and thus $f(a_v) = f(b_v)$.

For the second claim, suppose for a contradiction that $f(x) = f(b_v)$ for some $x \in (a_v,b_v)$.  It is straightforward to see that if $x < v$, then $\langle x,b_v,c \rangle$ removes $v$, and if $x > v$, then $\langle a_v,x,c \rangle$ removes $v$.  In either case, we have a contradiction with the minimality of $[a_v,b_v]$.
\end{proof}

\begin{lem}
\label{lem:av bv no overlap}
Let $f$ and $Z$ be as above.  For any visors $v_1,v_2$ under $f$ which are removable, if $(a_{v_1},b_{v_1}) \cap (a_{v_2},b_{v_2}) \neq \emptyset$ then $[a_{v_1},b_{v_1}] = [a_{v_2},b_{v_2}]$.
\end{lem}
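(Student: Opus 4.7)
The plan is to split the argument into two cases based on whether $v_1$ and $v_2$ are visors for the same element of $Z$. First, I would check that for any visor $v$ for $z_j$, the interval $[a_v, b_v]$ lies in $[z_{j-1}, z_j]$ (with the convention $z_0 := 0$): the definition of visor forces $v \in (z_{j-1}, z_j)$, and since $(a_v, b_v) \cap Z = \emptyset$ contains $v$, one gets $a_v \ge z_{j-1}$ and $b_v \le z_j$. Hence if $v_1, v_2$ are visors for distinct $z_{j_1}, z_{j_2}$, their respective open intervals $(a_{v_i}, b_{v_i})$ lie in disjoint open subintervals of $I$ and cannot overlap.

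Assume then that $v_1, v_2$ are visors for a common $z_j$. WLOG $a_{v_1} \le a_{v_2}$, and if $a_{v_1} = a_{v_2}$ then WLOG also $b_{v_1} \le b_{v_2}$. If both of these are equalities the conclusion holds, so three configurations remain, for which I will derive contradictions using Lemma~\ref{lem:av bv clear}(2) together with conditions (3) and (5) of Definition~\ref{def:visor}.

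In the configuration $a_{v_1} < a_{v_2}$ with $b_{v_1} \le b_{v_2}$, overlap forces $a_{v_2} \in (a_{v_1}, b_{v_1})$, so Lemma~\ref{lem:av bv clear}(2) applied to $v_1$ gives $f(a_{v_2}) > f(b_{v_1})$. Combined with either $b_{v_1} = b_{v_2}$, or the inequality $f(b_{v_1}) > f(b_{v_2})$ obtained from Lemma~\ref{lem:av bv clear}(2) for $v_2$ when $b_{v_1} \in (a_{v_2}, b_{v_2})$, this yields $f(a_{v_2}) > f(b_{v_2})$, contradicting condition (3) of Definition~\ref{def:visor} for $v_2$ (applicable since $a_{v_2} > 0$). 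In the configuration $a_{v_1} = a_{v_2}$ with $b_{v_1} < b_{v_2}$, Lemma~\ref{lem:av bv clear}(2) for $v_2$ gives $f(b_{v_1}) > f(b_{v_2})$, while condition (5) for any witness triple $\langle a_{v_1}, b_{v_1}, c_1 \rangle$ for $v_1$, together with $b_{v_2} \in [b_{v_1}, c_1]$ (since $c_1 > z_j \ge b_{v_2}$), yields $f(b_{v_1}) \le f(b_{v_2})$, a contradiction.

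The main obstacle is the remaining nested configuration $a_{v_1} < a_{v_2} < b_{v_2} < b_{v_1}$, which cannot be settled purely from the internal structure of the minimal intervals, since the above types of arguments fail. My plan is to combine $f(b_{v_2}) > f(b_{v_1})$, obtained from Lemma~\ref{lem:av bv clear}(2) for $v_1$ applied at $b_{v_2}$, with condition (5) of Definition~\ref{def:visor} applied to a witness triple $\langle a_{v_2}, b_{v_2}, c_2 \rangle$ for $v_2$: since $c_2 > z_j \ge b_{v_1} > b_{v_2}$, the point $b_{v_1}$ lies in $[b_{v_2}, c_2]$, forcing $f(b_{v_1}) \ge f(b_{v_2})$ and giving the desired contradiction.
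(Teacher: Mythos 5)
Your proof is correct and uses the same essential ingredients as the paper's---Lemma~\ref{lem:av bv clear}(1)--(2), condition~(5) of a removing triple, and the fact that $c > z_j$ puts $b_{v_1}, b_{v_2}$ inside the relevant $[b,c]$ intervals. The paper organizes the argument slightly differently (normalizing by $b_{v_1} \le b_{v_2}$ first and proving $b_{v_1} = b_{v_2}$ before touching the $a$'s, which avoids your nested case), and it leaves implicit the observation that overlapping minimal intervals must be associated to the same $z_j$, but these are minor structural variations on the same argument.
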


\begin{proof}
Let $v_1,v_2$ be visors under $f$ which are removable, and let $c_1,c_2 \in I$ be such that $\langle a_{v_1},b_{v_1},c_1 \rangle$ removes $v_1$ and $\langle a_{v_2},b_{v_2},c_2 \rangle$ removes $v_2$.  Suppose that $(a_{v_1},b_{v_1}) \cap (a_{v_2},b_{v_2}) \neq \emptyset$.  Without loss of generality, we may assume that $b_{v_1} \leq b_{v_2}$, so that $b_{v_1} \in (a_{v_2},b_{v_2}]$.  If $b_{v_1} < b_{v_2}$, then $f(b_{v_1}) > f(b_{v_2})$ by Lemma~\ref{lem:av bv clear}(2), but this contradicts the fact that $\langle a_{v_1},b_{v_1},c_1 \rangle$ removes $v_1$.  Therefore, $b_{v_1} = b_{v_2}$.

Again without loss of generality, we may assume that $a_{v_1} \leq a_{v_2}$, so that $a_{v_2} \in [a_{v_1},b_{v_1})$.  If $a_{v_1} < a_{v_2}$, then $f(a_{v_2}) > f(b_{v_1})$ by Lemma~\ref{lem:av bv clear}(2).  At the same time, by Lemma~\ref{lem:av bv clear}(1), we have $f(a_{v_2}) = f(b_{v_2}) = f(b_{v_1})$, a contradiction.  Therefore, $a_{v_1} = a_{v_2}$.  Thus, $[a_{v_1},b_{v_1}] = [a_{v_2},b_{v_2}]$.
\end{proof}

The next Lemma is the main technical result of this section, and is the key ingredient in the proofs of Propositions~\ref{prop:embed n pts accessible} and \ref{prop:embed ctble set accessible} below.  In light of the comments above comparing visors and negative radial departures, it may be seen as a strengthening of Lemma~5.1 of \cite{ammerlaan-anusic-hoehn2023}.

We denote by $\mathbb{H}$ the closed right half-plane,
\[ \mathbb{H} = \{\langle x,y \rangle: x \geq 0\} .\]

\begin{lem}
\label{lem:tuck}
Let $f$ and $Z$ be as above.  Suppose that every $Z$-visor under $f$ is removable.  Then for any $\varepsilon > 0$ there exists an embedding $\Phi \colon I \to \mathbb{H}$ such that
\begin{enumerate}
\item $\|\Phi(x) - \langle 0,f(x) \rangle\| < \varepsilon$ for all $x \in I$;
\item $\Phi(z_j) = \langle 0,f(z_j) \rangle$ for each $j \in \{1,\ldots,n\}$, and $\Phi(x) \notin \partial \mathbb{H}$ for all $x \in I \smallsetminus Z$; and
\item $\Phi \left( I \smallsetminus [z_j,z_{j+1}] \right)$ is disjoint from the bounded complementary component of the circle $\Phi \left( [z_j,z_{j+1}] \right) \cup \left( \{0\} \times [f(z_j),f(z_{j+1})] \right)$, for each $j \in \{1,\ldots,n-1\}$.
\end{enumerate}
\end{lem}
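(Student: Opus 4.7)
The plan is to construct $\Phi$ as a controlled perturbation of the projection $x \mapsto \langle 0, f(x) \rangle$, touching $\partial \mathbb{H}$ exactly at the points of $Z$, with ``hook''-shaped tucks performed on each minimal removable-visor interval $[a_v, b_v]$ from Lemma~\ref{lem:av bv}. First I would enumerate the finite family of distinct tuck intervals $J_1 = [a_1, b_1], \ldots, J_m = [a_m, b_m]$; these are pairwise disjoint by Lemma~\ref{lem:av bv no overlap}, they satisfy $(a_i, b_i) \cap Z = \emptyset$, and they contain every $Z$-visor of $f$.

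On each tuck interval $J_i$, define $\Phi$ as a hook whose $y$-coordinate is exactly $f(x)$: the arc starts at $\langle 0, f(a_i)\rangle$ (at the origin if $a_i = 0$), rises into $\mathbb{H}$ through the maximum height $M_i = \max_{x \in J_i} f(x)$, and returns to a point close to $\langle 0, f(b_i)\rangle$. The structural facts supplied by Lemma~\ref{lem:av bv clear} make this possible: $f(a_i) = f(b_i)$ when $a_i > 0$, so the endpoints sit at the same height; and $f(x) > f(b_i)$ strictly for $x \in (a_i, b_i)$, so the hook opens up into $\mathbb{H}$ between its endpoints without re-meeting $\partial \mathbb{H}$. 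The $x$-coordinate of the hook is kept below $\varepsilon$, and the ascending and descending branches are horizontally separated so that $\Phi|_{J_i}$ is injective.

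On the complementary set $K = I \setminus \bigcup_i (a_i, b_i)$, no $Z$-visor of $f$ survives, so the natural projection fails injectivity only in ``resolvable'' ways. I would set $\Phi(x) = \langle h_0(x), f(x)\rangle$ for a continuous $h_0 \colon K \to [0, \varepsilon)$ with $h_0^{-1}(0) = Z$, matched to the chosen boundary values of the hooks at each $a_i$ and $b_i$. The absence of visors in $K$, combined with the ordering $f(z_1) < \cdots < f(z_n)$, allows a consistent choice of $h_0$ on overlapping monotonic pieces of $f$ so that $\Phi$ is injective on all of $K$ and, after gluing with the hooks, on all of $I$. Conditions (1) and (2) are then immediate from the construction.

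The main obstacle is verifying condition~(3). For each $j$, the Jordan curve $\Phi([z_j, z_{j+1}]) \cup (\{0\} \times [f(z_j), f(z_{j+1})])$ bounds a disk in $\mathbb{H}$ whose interior decomposes into pockets formed by the hooks on tuck intervals lying in $(z_j, z_{j+1})$, together with thin strips adjacent to $\partial \mathbb{H}$. One must then argue that each arc $\Phi([z_k, z_{k+1}])$ with $k \neq j$ remains inside the pockets of its own surrounding disk and hence avoids the disk for $[z_j, z_{j+1}]$. This separation follows from the uniform smallness of the perturbation on $K$ and from each hook on $J_i$ being nested inside the pocket assigned to it. The argument is the natural multi-point generalization of Lemma~5.1 of \cite{ammerlaan-anusic-hoehn2023}: the delicate point is choosing $h_0$ on $K$ so that nearby folds of $f$ on different pieces of $K$ are resolved compatibly, which is precisely the obstruction that the removable-visor hypothesis eliminates.
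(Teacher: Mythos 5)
Your overall framework—a small $x$-perturbation of the projection $x\mapsto\langle 0,f(x)\rangle$ that touches $\partial\mathbb{H}$ only at $Z$, with hook-shaped detours over the minimal removable intervals $[a_v,b_v]$—is consistent with the shape of the paper's construction, and conditions (1) and (2) come out easily either way. But you have identified the real difficulty—condition (3)—and then hand-waved precisely the step that constitutes the substance of the paper's proof. The issue is that a hook over $[a_v,b_v]$, where $v$ is a visor for $z_k$, reaches heights up to $f(v)$ and therefore crosses the $y$-ranges $[f(z_j),f(z_{j+1})]$ of \emph{every} pocket with $f(z_{j+1})\leq f(v)$. For (3) to hold, the hook must be pushed out of all of those pockets simultaneously, but it is a piece of a single curve that also has to reconnect through the boundary points $\Phi(z_j)=\langle 0,f(z_j)\rangle$. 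Deciding the relative horizontal order of the hook and of the arcs $\Phi([z_j,z_{j+1}])$ at each $y$-level, consistently across all visors and all pockets, is a genuine combinatorial problem, and ``the absence of visors on $K$ allows a consistent choice of $h_0$'' does not solve it.

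The paper's Step~1 supplies exactly the missing mechanism: the notion of a \emph{target} $c_v$ (Claim~\ref{claim:target}; a value such that $\langle a_v,b_v,c_v\rangle$ removes $v$ \emph{and} $c_v>z_{j'}$ whenever $f(v)>f(z_{j'})$), together with a compatibility ordering among targets of different visors (Claims~\ref{claim:target order} and~\ref{claim:choose targets}). In Step~2 the visor tooth is not merely pushed slightly off the $y$-axis in place; it is transported, in a slanted intermediate picture $\Psi$, to the horizontal position corresponding to $c_v$, which by design lies to the far side of every $z_{j'}$ whose pocket the tooth's height range intersects, and only then is the picture collapsed back to $\Phi$. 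That relocation is what controls the left-to-right order at each $y$-level and makes (3) verifiable. Nothing in your proposal plays the role of the targets or of the consistency argument of Claim~\ref{claim:choose targets}, so as written the proposal has a gap at exactly the point you flag as ``the delicate point.''
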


\begin{proof}
The proof will proceed in two steps.  In the first part, we work with properties of visors, towards showing (Claim~\ref{claim:choose targets} below) that we can make a coherent choice of ``targets'' for where to place parts of the graph of $f$ which contain visors, to get them out of the way of the points on the graph corresponding to the points $\{z_1,\ldots,z_n\}$.  In the second part, we carry out the actual embedding in the half-plane.  The reader may find it helpful to refer to Figure~\ref{fig:visor tuck} to visualize the key ideas.

\medskip
\noindent \textbf{Step 1.}
Here we prove some properties of visors under $f$, under the assumption that all visors are removable.

\begin{claim}
\label{claim:target}
Let $v$ be a visor under $f$.  There exists a value $c$ such that $\langle a_v,b_v,c \rangle$ removes $v$, and $c > z_{j'}$ for all $j' \in \{1,\ldots,n\}$ such that $f(v) > f(z_{j'})$.
\end{claim}

Call such a value $c$ a \emph{target} for $v$.

\begin{proof}[Proof of Claim~\ref{claim:target}]
\renewcommand{\qedsymbol}{\textsquare (Claim~\ref{claim:target})}
Let $j \in \{1,\ldots,n\}$ be such that $v$ is a visor for $z_j$ under $f$.  Let $c \in I$ be maximal such that $\langle a_v,b_v,c \rangle$ removes $v$.  Suppose for a contradiction that $c \leq z_{j'}$ for some $j' \in \{1,\ldots,n\}$ such that $f(v) > f(z_{j'})$.  We may assume that $z_{j'-1} < c \leq z_{j'}$.  Clearly $c \neq z_{j'}$ since $f(c) \geq f(v) > f(z_{j'})$.  Now $c$ is a visor for $z_{j'}$ under $f$.  Since, by assumption, all visors are removable, there exists a triple $\langle a',b',c' \rangle$ which removes $c$.  We claim that $\langle a_v,b_v,c' \rangle$ removes $v$.  Indeed, $a_v < v < b_v$ and $z_j < z_{j'} < c'$, and also $(a_v,b_v) \cap Z = \emptyset$.  Also, either $a_v = 0$ and $0 \notin Z$, or $f(a_v) \leq f(x)$ for all $x \in [a_v,b_v]$; either $c' = 1$, or $f(c') \geq f(c) \geq f(x)$ for all $x \in [a_v,b_v]$; and $f(b_v) \leq f(x)$ for all $x \in [b_v,c']$, since $f(b_v) \leq f(x)$ for all $x \in [b_v,c]$, $f(b_v) \leq f(a') \leq f(x)$ for all $x \in [a',b']$, and $f(b_v) \leq f(b') \leq f(x)$ for all $x \in [b',c']$.  But this contradicts the maximality of $c$.
\end{proof}

Choose a finite collection $\mathcal{V}$ of visors under $f$ such that (1) every visor under $f$ is contained in $\bigcup_{v \in \mathcal{V}} [a_v,b_v]$, (2) the intervals $[a_v,b_v]$, $v \in V$, are all distinct, and (3) for each $v \in \mathcal{V}$, $f(v) = \max \{f(x)$: $x \in [a_v,b_v]\}$.  Note that by Lemma~\ref{lem:av bv no overlap}, if $u,v \in \mathcal{V}$, then $u < v$ if and only if $b_u \leq a_v$.

\begin{claim}
\label{claim:target order}
Let $v_1,v_2 \in \mathcal{V}$ such that $f(v_1) \geq f(v_2)$, and let $c_1$ be a target for $v_1$.
\begin{enumerate}
\item If $v_1 < v_2$, then there exists a target $c_2$ for $v_2$ such that $c_2 \leq c_1$.
\item If $v_2 < v_1$, then either there exists a target $c_2$ for $v_2$ such that $c_2 < a_{v_1}$, or $c_1$ is a target for $v_2$.
\end{enumerate}
\end{claim}

\begin{proof}[Proof of Claim~\ref{claim:target order}]
\renewcommand{\qedsymbol}{\textsquare (Claim~\ref{claim:target order})}
Let $j_2 \in \{1,\ldots,n\}$ be such that $v_2$ is a visor for $z_{j_2}$ under $f$.  Since $f(v_1) \geq f(v_2) > f(z_{j_2})$, by definition of a target, we must have $z_{j_2} < c_1$.  Also, $f(c_1) \geq f(v_1) \geq f(v_2) \geq f(x)$ for all $x \in [a_{v_2},b_{v_2}]$ by definition of $\mathcal{V}$.  Thus, if $f(b_{v_2}) \leq f(x)$ for all $x \in [b_{v_2},c_1]$, then $\langle a_{v_2},b_{v_2},c_1 \rangle$ removes $v_2$.  Furthermore, if $\langle a_{v_2},b_{v_2},c_1 \rangle$ removes $v_2$, then $c_1$ is a target for $v_2$.

For (1), suppose $v_1 < v_2$.  Let $c$ be any target for $v_2$.  If $c \leq c_1$, then let $c_2 = c$.  If $c > c_1$, then $f(b_{v_2}) \leq f(x)$ for all $x \in [b_{v_2},c] \supset [b_{v_2},c_1]$, hence $\langle a_{v_2},b_{v_2},c_1 \rangle$ removes $v_2$, so we may let $c_2 = c_1$.  In either case, we have a target $c_2$ for $v_2$ with $c_2 \leq c_1$.

For (2), suppose $v_2 < v_1$.  Let $c$ be any target for $v_2$.  If $c < a_{v_1}$, then let $c_2 = c$.  If $c \geq a_{v_1}$, then
\begin{itemize}
\item $f(b_{v_2}) \leq f(x)$ for all $x \in [b_{v_2},c] \supseteq [b_{v_2},a_{v_1}]$,
\item $f(b_{v_2}) \leq f(a_{v_1}) \leq f(x)$ for all $x \in [a_{v_1},b_{v_1}]$, and
\item $f(b_{v_2}) \leq f(a_{v_1}) \leq f(b_{v_1}) \leq f(x)$ for all $x \in [b_{v_1},c_1]$,
\end{itemize}
hence $f(b_{v_2}) \leq f(x)$ for all $x \in [b_{v_2},c_1]$.  Thus, $\langle a_{v_2},b_{v_2},c_1 \rangle$ removes $v_2$, so $c_1$ is a target for $v_2$.
\end{proof}

\begin{claim}
\label{claim:choose targets}
We may choose, for each $v \in \mathcal{V}$, a target $c_v$ for $v$, such that for all $u,v \in \mathcal{V}$, if $u < v$, then $c_u < a_v$ or $c_v \leq c_u$.
\end{claim}

\begin{proof}[Proof of Claim~\ref{claim:choose targets}]
\renewcommand{\qedsymbol}{\textsquare (Claim~\ref{claim:choose targets})}
Fix an enumeration $\mathcal{V} = \{w_1,\ldots,w_m\}$ such that $f(w_1) \geq f(w_2) \geq \cdots \geq f(w_m)$.  Let $c_{w_1}$ be an arbitrary target for $w_1$.  To choose the targets for $w_2,\ldots,w_m$, we proceed recursively.

Let $k \in \{2,\ldots,m\}$ and suppose for induction that targets $c_{w_1},\ldots,c_{w_{k-1}}$ have been chosen for $w_1,\ldots,w_{k-1}$ such that for all $u,v \in \{w_1,\ldots,w_{k-1}\}$, if $u < v$, then $c_u < a_v$ or $c_v \leq c_u$.  We will choose $c_{w_k}$ below to ensure that
\begin{enumerate}[label=($\ast$)]
\item \label{rec} for all $u,v \in \{w_1,\ldots,w_k\}$, if $u < v$, then $c_u < a_v$ or $c_v \leq c_u$.
\end{enumerate}
To verify the condition \ref{rec}, we need only consider the cases where either $u = w_k$ or $v = w_k$.  In fact, it suffices to prove that:
\begin{enumerate}[label=(\alph{*})]
\item If there exists $u \in \{w_1,\ldots,w_{k-1}\}$ with $u < w_k$, and if $u_0$ is the largest such element, then $c_{w_k} \leq c_{u_0}$; and
\item If there exists $v \in \{w_1,\ldots,w_{k-1}\}$ with $w_k < v$, and if $v_0$ is the smallest such element, then either $c_{w_k} < a_{v_0}$ or $c_{v_0} = c_{w_k}$.
\end{enumerate}
Indeed, if $u \in \{w_1,\ldots,w_{k-1}\}$ and $u < w_k$, and if $u_0$ is the largest such element, then $u \leq u_0$ and $c_u > w_k > a_{u_0}$ (by definition of a target, since $f(u) \geq f(w_k)$), so by (a) and by induction, $c_{w_k} \leq c_{u_0} \leq c_u$.  Also, if $v \in \{w_1,\ldots,w_{k-1}\}$ and $w_k < v$, and if $v_0$ is the smallest such element, then by (b) either $c_{w_k} < a_{v_0} \leq a_v$ or $c_{v_0} = c_{w_k}$, and in the latter case, by induction, either $c_{w_k} = c_{v_0} < a_v$ or $c_v \leq c_{v_0} = c_{w_k}$.

To choose $c_{w_k}$, we consider two cases.

\medskip
\textbf{Case 1:} Suppose there exists $u \in \{w_1,\ldots,w_{k-1}\}$ with $u < w_k$, let $u_0$ be the largest such element, and suppose that for all $v \in \{w_1,\ldots,w_{k-1}\}$ with $w_k < v$ we have $c_{u_0} < a_v$.

By Claim~\ref{claim:target order}(1), we may choose $c_{w_k}$ such that $c_{w_k} \leq c_{u_0}$.  Here we immediately have condition (a) satisfied.  For (b), if there exists $v \in \{w_1,\ldots,w_{k-1}\}$ with $w_k < v$, and if $v_0$ is the smallest such element, then by assumption in this case, we have $c_{u_0} < a_{v_0}$.

\medskip
If Case 1 does not hold, then either (i) there does not exist $u \in \{w_1,\ldots,w_{k-1}\}$ with $u < w_k$, in which case there exists $v \in \{w_1,\ldots,w_{k-1}\}$ with $w_k < v$ since $k \geq 2$, or (ii) there exists $u \in \{w_1,\ldots,w_{k-1}\}$ with $u < w_k$, and if $u_0$ is the largest such element, then there exists $v \in \{1,\ldots,w_{k-1}\}$ with $w_k < v$ and $c_{u_0} \geq a_v$.  In the event that (ii) holds, by induction, $c_v \leq c_{u_0}$ for any such $v$.  In particular, if $v_0 \in \{w_1,\ldots,w_{k-1}\}$ is the smallest element with $w_k < v_0$, then $c_{v_0} \leq c_{u_0}$.  Also, if $u \in \{w_1,\ldots,w_{k-1}\}$ is any element with $u < w_k$, then by definition of a target, $c_u > w_k > a_{u_0}$, so $c_{u_0} \leq c_u$ by induction.  Thus, $c_{v_0} \leq c_{u_0} \leq c_u$.  The following Case 2 encapsulates both of these possibilities (i) and (ii).

\medskip
\textbf{Case 2:} Suppose that there exists $v \in \{w_1,\ldots,w_{k-1}\}$ with $w_k < v$, let $v_0$ be the smallest such element, and suppose that for all $u \in \{w_1,\ldots,w_{k-1}\}$ with $u < w_k$ we have $c_{v_0} \leq c_u$.

By Claim~\ref{claim:target order}(2), we may choose $c_{w_k}$ such that either $c_{w_k} < a_{v_0}$, or $c_{w_k} = c_{v_0}$.  In particular, note that $c_{w_k} \leq c_{v_0}$.  Here we immediately have condition (b) satisfied.  For (a), if there exists $u \in \{w_1,\ldots,w_{k-1}\}$ with $u < w_k$, and if $u_0$ is the largest such element, then $c_{w_k} \leq c_{v_0} \leq c_{u_0}$ by assumption in this case.

\medskip
This completes the recursive construction.
\end{proof}

\medskip
\noindent \textbf{Step 2.}
From the first part of the proof, we have a finite collection $\mathcal{V}$ of visors under $f$ such that (1) every visor under $f$ is contained in $\bigcup_{v \in \mathcal{V}} [a_v,b_v]$, (2) the intervals $[a_v,b_v]$, $v \in V$, are all distinct, and (3) for each $v \in \mathcal{V}$, $f(v) = \max \{f(x)$: $x \in [a_v,b_v]\}$.  Furthermore, we have, for each $v \in \mathcal{V}$, a target $c_v$ for $v$, such that for all $u,v \in \mathcal{V}$, if $u < v$, then $c_u < a_v$ or $c_v \leq c_u$.  See the top picture in Figure~\ref{fig:visor tuck} for an illustration of these elements in a specific example.

\begin{figure}
\begin{center}
\includegraphics{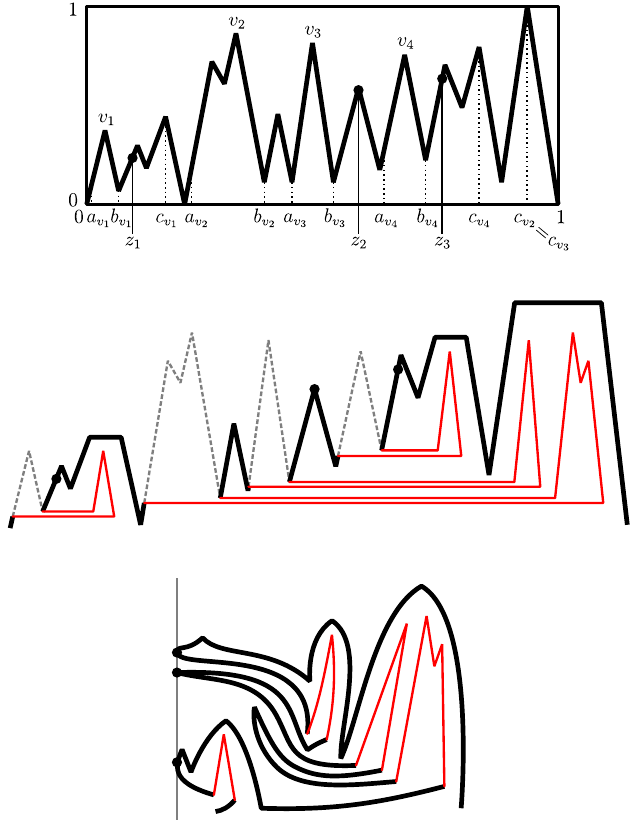}
\end{center}

\caption{First picture: An example of a map $f \colon I \to I$ and a set $Z = \{z_1,z_2,z_3\}$.  In this example, $\mathcal{V} = \{v_1,v_2,v_3,v_4\}$, where $v_1$ is a visor of $z_1$, $v_2$ and $v_3$ are visors of $z_2$, and $v_4$ is a visor of $z_3$.  For each $v \in \mathcal{V}$ the elements $a_v,b_v,c_v$ are displayed.  Second picture: An illustration of the image of $\Psi$ for this example.  The dashed lines show parts of the graph of $f$ which have been ``moved'' to the right (replaced by the parts shown in red).  Third picture: An illustration of the image of $\Phi$ for this example.  The points marked with dots are the images of $z_1,z_2,z_3$.}
\label{fig:visor tuck}
\end{figure}

We now proceed with constructing the desired embedding $\Phi$ of $I$ into $\mathbb{H}$.  By a small perturbation of $f$, we may construct a map $f' \colon I \to I$ with the following properties:
\begin{enumerate}
\item $\|\langle \frac{\varepsilon}{2},f'(x) \rangle - \langle 0,f(x) \rangle\| < \varepsilon$ for all $x \in I$;
\item For any $v \in \mathcal{V}$, $f'(a_v) < f'(x)$ for all $x \in (a_v,b_v]$ (if $a_v \neq 0$, or if $a_v = 0$ and $0 \in Z$), $f'(b_v) < f'(x)$ for all $x \in (b_v,c_v]$, and $f'(x) < f'(c_v)$ for all $x \in [a_v,b_v]$;
\item $f'(z) = f(z)$ for all $z \in Z$; and
\item For each $j \in \{1,\ldots,n\}$ and any $x \in [z_{j-1},z_j)$ (or $[0,z_1)$ in the case $j=1$), if $x \notin (a_v,b_v)$ for all $v \in \mathcal{V}$, then $f'(x) < f'(z_j)$.
\end{enumerate}

Let $\psi \colon [0,\frac{\varepsilon}{2}] \to I$ be a non-decreasing, onto map such that $\psi^{-1}(x)$ is a non-degenerate interval for each $x$ which is equal to either $a_v$ (if $a_v \neq 0$, or if $a_v = 0$ and $0 \in Z$), $b_v$, or $c_v$ for some $v \in \mathcal{V}$, and $\psi^{-1}(x)$ is a singleton for all other $x \in I$.  For each $v \in \mathcal{V}$, choose an interval $[b_v',a_v'] \subset \psi^{-1}(c_v)$, in such a way that for any $v_1,v_2 \in \mathcal{V}$ with $v_1 < v_2$, if $c_{v_1} = c_{v_2}$ then $a_{v_2}' < b_{v_1}'$.  Define $\Psi \colon [0,\frac{\varepsilon}{2}] \to \mathbb{H}$ as follows:
\begin{itemize}
\item For all $t \in [0,\frac{\varepsilon}{2}]$ such that $\psi(t) \notin \bigcup_{v \in \mathcal{V}} [a_v,b_v]$, define $\Psi(t) = \langle t,f'(\psi(t)) \rangle$;
\item For each $v \in \mathcal{V}$, if we denote $\psi^{-1}(a_v) = [t_1,t_2]$ (or $\psi^{-1}(a_v) = \{t_2\}$ if $a_v = 0$) and $\psi^{-1}(b_v) = [u_1,u_2]$, then:
\begin{itemize}
\item If $a_v \neq 0$, or if $a_v = 0$ and $0 \in Z$, $\Psi {\restriction}_{[t_1,t_2]}$ parameterizes the horizontal segment from $\langle t_1,f'(a_v) \rangle$ to $\langle a_v',f'(a_v) \rangle$;
\item $\Psi {\restriction}_{[u_1,u_2]}$ parameterizes the horizontal segment from $\langle b_v',f'(b_v) \rangle$ to $\langle u_2,f'(b_v) \rangle$; and
\item For each $t \in [t_2,u_1]$, define $\Psi(t) = \langle \rho(t),f'(\psi(t)) \rangle$, where $\rho \colon [t_2,u_1] \to [b_v',a_v']$ is the linear homeomorphism with $\rho(t_2) = a_v'$ and $\rho(u_1) = b_v'$.
\end{itemize}
\end{itemize}
See the second picture in Figure~\ref{fig:visor tuck} for an illustration of the image of this map $\Psi$ in a specific example.

Note that $\Psi$ is one-to-one, so $\Psi([0,\frac{\varepsilon}{2}])$ is an arc in $\mathbb{H}$.  Moreover, by construction, for each $z \in Z$, the intersection of $\Psi([0,\frac{\varepsilon}{2}])$ with the horizontal arc $[0,\min \psi^{-1}(z)] \times \{f(z)\}$ is $\{\langle \min \psi^{-1}(z),f(z) \rangle\}$.

Let $M \colon \mathbb{H} \to \mathbb{H}$ be a monotone map such that the $y$-coordinate of $M(p)$ equals the $y$-coordinate of $p$ for each $p \in \mathbb{H}$, $M([0,\frac{\varepsilon}{2}] \times \mathbb{R}) \subseteq [0,\frac{\varepsilon}{2}] \times \mathbb{R}$, $M(\Psi(\psi^{-1}(x)))$ is a singleton for each $x \in I$, $M([0,\min \psi^{-1}(z)] \times \{f(z)\}) = \{\langle 0,f(z) \rangle\}$ for each $z \in Z$, and $M$ is otherwise one-to-one.  Finally, define $\Phi \colon I \to M(\Psi([0,\frac{\varepsilon}{2}]))$ by $\Phi(x) = M(\Psi(\psi^{-1}(x)))$ (see the third picture in Figure~\ref{fig:visor tuck} for an illustration).  This $\Phi$ has the desired properties.
\end{proof}

The next two results provide sufficient conditions under which an arc-like continuum may be embedded in the plane so as to make a given finite (Proposition~\ref{prop:embed n pts accessible}) or countably infinite (Proposition~\ref{prop:embed ctble set accessible}) set accessible (compare with Proposition~5.2 of \cite{ammerlaan-anusic-hoehn2023}).  Both can be proved using Lemma~\ref{lem:tuck} above, though the proof of Proposition~\ref{prop:embed ctble set accessible} involves somewhat more extensive bookkeeping.  We present the proof of Proposition~\ref{prop:embed ctble set accessible} only; it may easily be adapted to establish Proposition~\ref{prop:embed n pts accessible} as well.

Given an inverse limit $X = \varprojlim \left\langle I,f_i \right\rangle$, for each $i \in \mathbb{N}$, let $\pi_i \colon X \to I$ be the $i$-th coordinate projection, $\pi_i(\langle x_1,x_2,\ldots, \rangle) = x_i$.

\begin{prop}
\label{prop:embed n pts accessible}
Let $X = \varprojlim \left\langle I,f_i \right\rangle$ be an arc-like continuum, and let $\mathbf{Z} = \{\mathbf{z}_1,\ldots,\mathbf{z}_n\}$ be a set of $n$ points in $X$.  Suppose that for each $i \in \mathbb{N}$,
\begin{enumerate}
\item The points $\pi_{i+1}(\mathbf{z}_1),\ldots,\pi_{i+1}(\mathbf{z}_n)$ are all distinct;
\item For each $z,z' \in \pi_{i+1}(\mathbf{Z})$, if $z < z'$ then $f_i(z) < f_i(z')$; and
\item Every $\pi_{i+1}(\mathbf{Z})$-visor under $f_i$ is removable.
\end{enumerate}
Then there exists an embedding $\Omega \colon X \to \mathbb{R}^2$ such that $\Omega(\mathbf{z})$ is an accessible point of $\Omega(X)$ for each $\mathbf{z} \in \mathbf{Z}$.
\end{prop}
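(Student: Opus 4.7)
The plan is to construct the embedding $\Omega$ using a Bing-style nested-tube construction in the right half-plane $\mathbb{H}$, with Lemma~\ref{lem:tuck} applied at every level of the inverse system so as to force the designated points onto $\partial\mathbb{H}$, thereby making them accessible in the limit.

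Concretely, I would first fix a rapidly decreasing summable sequence $\varepsilon_i > 0$ and apply Lemma~\ref{lem:tuck} to each bonding map $f_i$ with the set $Z_i := \pi_{i+1}(\mathbf{Z})$ and tolerance $\varepsilon_i$. Hypotheses~(1)--(3) are precisely what is needed for the lemma to apply, and the result is an embedding $\Phi_i \colon I \to \mathbb{H}$ with $\Phi_i(\pi_{i+1}(\mathbf{z}_j)) = \langle 0, f_i(\pi_{i+1}(\mathbf{z}_j)) \rangle = \langle 0, \pi_i(\mathbf{z}_j) \rangle \in \partial\mathbb{H}$ for each $j$. Next I would carry out the standard inductive construction of a nested sequence of thin closed planar ``tubes'' $T_1 \supseteq T_2 \supseteq \cdots$ in $\mathbb{H}$, where $T_{i+1}$ is obtained from $T_i$ by replacing each sub-tube of $T_i$ corresponding to a subinterval $J$ of the $i$-th factor space with a thin neighborhood of the appropriate restriction of $\Phi_{i+1}$, attached so that the overall union within $T_i$ imitates the graph of a small perturbation of $f_i$. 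The coherence across levels is guaranteed by hypothesis~(2): since the points of $\pi_{i+1}(\mathbf{Z})$ are ordered consistently with their images under $f_i$, the positions of the boundary markers on $\partial\mathbb{H}$ at level $i+1$ can be chosen to agree with those fixed at level $i$.

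Taking $\Omega(X) = \bigcap_i T_i$ then yields, by standard inverse-limit-of-arcs arguments, an embedding of $X$ with $\Omega(\mathbf{z}_j) \in \partial\mathbb{H}$ for each $j$; since $\Omega(X) \subseteq \mathbb{H}$, each point $\Omega(\mathbf{z}_j)$ is accessible via a horizontal arc approaching from the open left half-plane.

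The main obstacle, and the reason for Lemma~\ref{lem:tuck}'s careful statement, lies in the inductive step: at every level $i$, the parts of $\Phi_i$ that rise above the height $f_i(z_j)$ (the ``visor'' portions) must be tucked rightward away from $\partial\mathbb{H}$ far enough that they neither block the horizontal access at $\Phi_i(z_j)$ nor create obstructions once the finer structure from levels $i+1, i+2, \ldots$ is inserted. Property~(3) of Lemma~\ref{lem:tuck} — that the rest of $\Phi_i(I)$ does not enter the bounded complementary regions between consecutive marked segments of $\partial\mathbb{H}$ — is precisely what prevents the subsequent tubes $T_{i+1}, T_{i+2},\ldots$ from encroaching on these access paths. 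Because $\mathbf{Z}$ is finite, only finitely many access paths need to be tracked, so the bookkeeping is straightforward and closely parallels the proof of Proposition~5.2 in \cite{ammerlaan-anusic-hoehn2023}.
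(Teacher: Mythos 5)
Your approach is sound and would yield the result, but it takes a genuinely different route from the paper.  You build a nested sequence of tubes $T_1 \supseteq T_2 \supseteq \cdots$ in the closed right half-plane $\mathbb{H}$, keeping the designated points on $\partial\mathbb{H}$ at every level (via Lemma~\ref{lem:tuck}(2)), and take $\Omega(X) = \bigcap_i T_i$; accessibility then follows because a horizontal segment entering from the open left half-plane meets $\Omega(X) \subseteq \mathbb{H}$ only at the chosen boundary point.  The paper instead forms the augmented space $X^+$ by attaching a finite (or null) family of ``whisker'' arcs $A_j$ to $X$ at the points $\mathbf{z}_j$, represents $X^+$ as an inverse limit of trees $T_i \subset \mathbb{R}^2$ (the factor arc plus horizontal whiskers $[-1,0]\times\{\pi_i(\mathbf{z}_j)\}$), and invokes the Anderson--Choquet embedding theorem: it constructs orientation-preserving plane homeomorphisms $\Theta_i$ such that $\Theta_i(T_i)$ converges in the Hausdorff metric to a copy of $X^+$; the whiskers give accessibility directly.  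Both proofs rely on Lemma~\ref{lem:tuck} at each level and use property~(3) to keep subsequent structure out of the access paths, so the core mechanism is identical.  What Anderson--Choquet buys is a clean way to avoid verifying that the nested tubes actually converge to a copy of $X$ and that each $T_{i+1}$ can be realized inside the (geometrically complicated) thin tube $T_i$ while still meeting $\partial\mathbb{H}$ at exactly the marked points; your version requires the reader to trust these as ``standard inverse-limit-of-arcs arguments,'' whereas the paper discharges the convergence verification to a ready-made theorem and reduces the inductive step to a single $\varepsilon$-estimate on $\|p - \Theta \circ f_i^+ \circ (\Theta')^{-1}(p)\|$.  One small technical point worth noting in favor of the paper's method: using the plane homeomorphism $\widehat{H}$ that extends the Lemma~\ref{lem:tuck} embedding $\Phi$ together with the whiskers requires checking that $H$ preserves the local orientation at the points $\langle 0, \pi_{i+1}(\mathbf{z}_j)\rangle$, and property~(3) of Lemma~\ref{lem:tuck} is exactly what ensures this; in your tube picture the analogous fact is needed but less visible, and it is the main thing to verify carefully in the inductive step.
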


\begin{prop}
\label{prop:embed ctble set accessible}
Let $X = \varprojlim \left\langle I,f_i \right\rangle$ be an arc-like continuum, and let $\mathbf{Z} = \{\mathbf{z}_1,\mathbf{z}_2,\ldots\}$ be a countably infinite set of points in $X$.  Suppose that for each $i \in \mathbb{N}$,
\begin{enumerate}
\item The points $\pi_{i+1}(\mathbf{z}_1),\ldots,\pi_{i+1}(\mathbf{z}_i)$ are all distinct;
\item For each $z,z' \in \pi_{i+1}(\{\mathbf{z}_1,\ldots,\mathbf{z}_i\})$, if $z < z'$ then $f_i(z) < f_i(z')$; and
\item Every $\pi_{i+1}(\{\mathbf{z}_1,\ldots,\mathbf{z}_i\})$-visor under $f_i$ is removable.
\end{enumerate}
Then there exists an embedding $\Omega \colon X \to \mathbb{R}^2$ such that $\Omega(\mathbf{z})$ is an accessible point of $\Omega(X)$ for each $\mathbf{z} \in \mathbf{Z}$.
\end{prop}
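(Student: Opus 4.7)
The plan is to build the embedding $\Omega$ inductively, invoking Lemma~\ref{lem:tuck} at stage $i$ with the bonding map $f_i$ and the finite set $Z_i = \pi_{i+1}(\{\mathbf{z}_1,\ldots,\mathbf{z}_i\})$. Hypotheses (1)--(3) of the Proposition are exactly the hypotheses required to apply Lemma~\ref{lem:tuck} to $f_i$ with $Z = Z_i$. Fix a summable sequence $\varepsilon_i > 0$, possibly to be shrunk during the construction, and let $\Phi_i \colon I \to \mathbb{H}$ be the embedding furnished by Lemma~\ref{lem:tuck} applied to $(f_i, Z_i, \varepsilon_i)$.

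I will realize $X$ as a nested intersection of thin planar ``tubes'' $T_i \subset \mathbb{R}^2$ around a sequence of arcs $\iota_i(I)$. Take $\iota_1 = \Phi_1$, and having built $\iota_i \colon I \to \mathbb{R}^2$, choose a thin planar region $T_i$ having $\iota_i(I)$ as one boundary edge, together with a homeomorphism $h_i$ from a suitable subregion of $\mathbb{H}$ onto $T_i$ that identifies $\partial \mathbb{H}$ with $\iota_i(I)$ by the natural parameterization. Set $\iota_{i+1} = h_i \circ \Phi_{i+1}$. By property (1) of Lemma~\ref{lem:tuck}, taking $T_i$ sufficiently thin, we obtain $d(\iota_{i+1}(x), \iota_i(f_i(x))) < \varepsilon_{i+1}$ for every $x \in I$, and we may arrange $T_{i+1} \subset \intr(T_i)$. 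Setting $\Omega(\mathbf{x}) = \lim_i \iota_i(x_i)$, summability of $\varepsilon_i$ gives uniform convergence, and the nested chain-like tube structure makes $\Omega$ an embedding of $X$ with $\Omega(X) = \bigcap_i T_i$.

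For accessibility of $\Omega(\mathbf{z}_k)$, fix $k$. At every stage $i \geq k$, property (2) of Lemma~\ref{lem:tuck} places $\Phi_i(\pi_{i+1}(\mathbf{z}_k))$ on $\partial \mathbb{H}$, and property (3) ensures that the subarcs of $\Phi_i(I)$ on each side of this point, together with a segment of $\partial \mathbb{H}$, bound a topological disc whose interior is disjoint from the rest of $\Phi_i(I)$. Transported by $h_{i-1}$, this yields a small disc $D_i \subset T_{i-1}$ touching $\iota_i(I)$ only at $\iota_i(\pi_{i+1}(\mathbf{z}_k))$ and having one boundary edge on $\partial T_{i-1}$. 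Arranging the $D_i$ to be nested, their intersection provides an arc in $\mathbb{R}^2$ meeting $\Omega(X)$ only at $\Omega(\mathbf{z}_k)$.

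The principal obstacle is the bookkeeping required to make the access discs for \emph{every} $\mathbf{z}_k$ nest properly through the infinitely many stages $i \geq k$. At stage $i+1$ a new marked point $\mathbf{z}_{i+1}$ joins $Z_{i+1}$, so $\Phi_{i+1}$ is designed relative to a strictly larger set than $\Phi_i$; when $\Phi_{i+1}$ is routed inside $T_i$ via $h_i$, one must ensure that the new refinements lie on the correct side of every previously established access corridor. Here property (3) of Lemma~\ref{lem:tuck} is essential: for the finite set $Z_i$ it produces a \emph{definite} planar gap between the relevant portion of $\Phi_i(I)$ and the rest of the arc, and by choosing $\varepsilon_{i+1}$ and the thickness of $T_{i+1}$ smaller than this gap at each stage, earlier access corridors are preserved rather than destroyed. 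This is the essential difference from Proposition~\ref{prop:embed n pts accessible}, where only finitely many corridors need to be maintained.
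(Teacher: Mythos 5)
Your plan applies the right lemma in the right places (Lemma~\ref{lem:tuck} at stage $i$ with $Z_i = \pi_{i+1}(\{\mathbf{z}_1,\ldots,\mathbf{z}_i\})$), and the thin-tube Anderson–Choquet-style construction of $\Omega$ is essentially the same device the paper uses.  But there is a genuine gap in the accessibility argument that you yourself flag as ``the principal obstacle'' and then do not resolve.  Your access disc $D_i$ lives inside $T_{i-1}$ and has one edge on $\partial T_{i-1}$; as $i\to\infty$ the tubes $T_{i-1}$ shrink down to $\Omega(X)$, so the nested discs $D_i$ are squeezed arbitrarily thin and short.  The nested intersection $\bigcap_i D_i$ is a priori just some continuum containing $\Omega(\mathbf{z}_k)$; it need not be an arc, and even if it were, you have only controlled $D_i\cap\iota_i(I)$, not $D_i\cap\Omega(X)$.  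Getting a genuine arc $A$ in the plane with $A\cap\Omega(X)=\{\Omega(\mathbf{z}_k)\}$ requires an arc that \emph{extends outside} every $T_{i-1}$, which your construction does not deliver: there is no mechanism forcing the exit points on $\partial T_{i-1}$ to line up along a fixed arc across infinitely many stages while new marked points keep being introduced.

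The paper avoids this entirely by a structural trick: rather than embed $X$ and then manufacture access arcs afterwards, it first forms the augmented space $X^+$ obtained by attaching a null sequence of arcs $A_1,A_2,\ldots$ to $X$ at the points $\mathbf{z}_1,\mathbf{z}_2,\ldots$, realizes $X^+$ as an inverse limit of finite planar trees $T_i$ (the interval $\{0\}\times I$ with horizontal whiskers $[-1,0]\times\{\pi_i(\mathbf{z}_j)\}$ attached), and then shows $X^+$ embeds in the plane via Anderson–Choquet.  In that setting the accessibility of $\Omega(\mathbf{z}_k)$ is automatic, since the attached arc $A_k$ retains genuine length at every finite stage of the inverse limit and thus in the final embedding.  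The piece of Lemma~\ref{lem:tuck} that makes this go through is exactly property (3), which forces the tree embedding $H\colon T_{i+1}\to\mathbb{R}^2$ to respect local orientation at each branch point, so it extends to an orientation-\emph{preserving} plane homeomorphism $\widehat{H}$ (citing Adkisson–MacLane); that orientation control is what lets the whiskers consistently reach out to the same side at every stage.  If you want to salvage your tube-based version, you should incorporate that idea — carry the access arcs along as part of the space being embedded, rather than trying to reconstruct them from shrinking discs after the fact.
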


\begin{proof}
Consider the space $X^+$ obtained by adding a null sequence of pairwise disjoint arcs $A_1,A_2,\ldots$ to $X$, where for each $i = 1,2,\ldots$, $A_i$ is attached at an endpoint to $\mathbf{z}_i$, and is otherwise disjoint from $X$.  We may express $X^+$ as an inverse limit as follows.  For each $i = 1,2,\ldots$, let $T_i \subset \mathbb{R}^2$ be the tree which is the union of the arc $\{0\} \times I$ with the arcs $[-1,0] \times \pi_i(\mathbf{z}_j)$, $j = 1,\ldots,i-1$.  Define $f_i^+ \colon T_{i+1} \to T_i$ by $f_i^+(\langle 0,y \rangle) = \langle 0,f_i(y) \rangle$ for each $y \in I$, $f_i^+(\langle x,\pi_{i+1}(\mathbf{z}_j) \rangle) = \langle x,\pi_i(\mathbf{z}_j) \rangle$ for each $x \in [-1,0]$ and each $j = 1,\ldots,i-1$, and $f_i^+(\langle x,\pi_{i+1}(\mathbf{z}_i) \rangle) = \langle 0,\pi_i(\mathbf{z}_i) \rangle$ for each $x \in [-1,0]$.  Then $X^+ \approx \varprojlim \left\langle T_i,f_i^+ \right\rangle$.

We apply the Anderson-Choquet Embedding Theorem \cite{anderson-choquet1959} to show that $X^+$ can be embedded in $\mathbb{R}^2$.  We will construct a sequence of orientation-preserving homeomorphisms $\Theta_i \colon \mathbb{R}^2 \to \mathbb{R}^2$, $i = 1,2,\ldots$, in such a way that $X^+$ is homeomorphic to the Hausdorff limit of the sequence of embeddings $\Theta_i(T_i)$ of the factor spaces $T_i$.  It suffices to prove that for any $i \in \mathbb{N}$, any orientation-preserving homeomorphism $\Theta \colon \mathbb{R}^2 \to \mathbb{R}^2$, and any $\varepsilon > 0$, there exists an orientation-preserving homeomorphism $\Theta' \colon \mathbb{R}^2 \to \mathbb{R}^2$ such that for each $p \in \Theta'(T_{i+1})$, $\|p - \Theta \circ f_i^+ \circ (\Theta')^{-1}(p)\| < \varepsilon$.

Let $i \in \mathbb{N}$, let $\Theta \colon \mathbb{R}^2 \to \mathbb{R}^2$ be an orientation-preserving homeomorphism, and let $\varepsilon > 0$.  Let $\varepsilon' > 0$ be small enough so that for any $p,q \in \mathbb{R}^2$ with $p \in T_i$ and $\|p - q\| \leq \varepsilon'$, we have $\|\Theta(p) - \Theta(q)\| < \varepsilon$.

By Lemma~\ref{lem:tuck}, there exists an embedding $\Phi$ of $I$ in $\mathbb{R}^2$ such that
\begin{enumerate}
\item $\|\Phi(x) - \langle 0,f_i(x) \rangle\| < \varepsilon'$ for all $x \in I$;
\item $\Phi(\pi_{i+1}(\mathbf{z}_j)) = \langle 0,\pi_i(\mathbf{z}_j) \rangle$ for each $j \in \{1,\ldots,i\}$, and $\Phi(x) \notin \partial \mathbb{H}$ for all $x \in I \smallsetminus \pi_{i+1}(\{\mathbf{z}_1,\ldots,\mathbf{z}_i\})$; and
\item For any $j,j' \in \{1,\ldots,i\}$ such that $\pi_{i+1}(\mathbf{z}_j), \pi_{i+1}(\mathbf{z}_{j'})$ are consecutive (in the sense of the ordering in $I$) points in $\pi_{i+1}(\{\mathbf{z}_1,\ldots,\mathbf{z}_i\})$, the set
\[ \Phi \left( I \smallsetminus [\pi_{i+1}(\mathbf{z}_j),\pi_{i+1}(\mathbf{z}_{j'})] \right) \]
is disjoint from the bounded complementary component of the circle
\[ \Phi \left( [\pi_{i+1}(\mathbf{z}_j),\pi_{i+1}(\mathbf{z}_{j'})] \right) \cup \left( \{0\} \times [\pi_i(\mathbf{z}_j),\pi_i(\mathbf{z}_{j'})] \right) .\]
\end{enumerate}

Define the embedding $H \colon T_{i+1} \to \mathbb{R}^2$ by $H(\langle 0,y \rangle) = \Phi(y)$ for each $y \in I$, $H(\langle x,\pi_{i+1}(\mathbf{z}_j) \rangle) = \langle x,\pi_i(\mathbf{z}_j) \rangle$ for each $j \in \{1,\ldots,i-1\}$ and $x \in [-1,0]$, and $H(\langle x,\pi_{i+1}(\mathbf{z}_i) \rangle) = \langle x \cdot \varepsilon',\pi_i(\mathbf{z}_i) \rangle$ for each $x \in [-1,0]$.  It follows from property (3) above for $\Phi$ that $H$ preserves the local orientation in the plane at each point $\langle 0,\pi_{i+1}(\mathbf{z}_j) \rangle \in T_{i+1}$, $j \in \{1,\ldots,i\}$.  Therefore, $H$ can be extended to an orientation-preserving homeomorphism of the plane $\widehat{H} \colon \mathbb{R}^2 \to \mathbb{R}^2$ (see e.g.\ \cite{adkisson-maclane1940}).  Define the orientation-preserving plane homeomorphism $\Theta'$ by $\Theta' = \Theta \circ \widehat{H}$.

Let $p \in \Theta'(T_{i+1})$.  To prove that $\|p - \Theta \circ f_i^+ \circ (\Theta')^{-1}(p)\| < \varepsilon$, we consider three cases for the point $(\Theta')^{-1}(p) \in T_{i+1}$.

\medskip
\textbf{Case 1:} $(\Theta')^{-1}(p)$ has the form $\langle 0,y \rangle$ for some $y \in I$.  Then by property (1) above for $\Phi$, and the definitions of $f_i^+$ and $H$, $\|H((\Theta')^{-1}(p)) - f_i^+((\Theta')^{-1}(p))\| < \varepsilon'$.  Therefore, by choice of $\varepsilon'$, we have $\|\Theta(H((\Theta')^{-1}(p))) - \Theta(f_i^+((\Theta')^{-1}(p))) \| < \varepsilon$, i.e.\ $\|p - \Theta \circ f_i^+ \circ (\Theta')^{-1}(p)\| < \varepsilon$, as desired.

\medskip
\textbf{Case 2:} $(\Theta')^{-1}(p)$ has the form $\langle x,\pi_{i+1}(\mathbf{z}_j) \rangle$ for some $j \in \{1,\ldots,i-1\}$ and $x \in [-1,0]$.  In this case, $H((\Theta')^{-1}(p)) = f_i^+((\Theta')^{-1}(p)) = \langle x,\pi_i(\mathbf{z}_j) \rangle$, so $p = \Theta(H((\Theta')^{-1}(p))) = \Theta \circ f_i^+ \circ (\Theta')^{-1}(p)$.

\medskip
\textbf{Case 3:} $(\Theta')^{-1}(p)$ has the form $\langle x,\pi_{i+1}(\mathbf{z}_i) \rangle$ for some $x \in [-1,0]$.  In this case, $H((\Theta')^{-1}(p)) = \langle x \cdot \varepsilon',\pi_i(\mathbf{z}_i) \rangle$ and $f_i^+((\Theta')^{-1}(p)) = \langle 0,\pi_i(\mathbf{z}_i) \rangle$.  Therefore, by choice of $\varepsilon'$, we have $\|\Theta(H((\Theta')^{-1}(p))) - \Theta(f_i^+((\Theta')^{-1}(p))) \| < \varepsilon$, i.e.\ $\|p - \Theta \circ f_i^+ \circ (\Theta')^{-1}(p)\| < \varepsilon$, as desired.
\end{proof}

\section{Knaster continua}
\label{sec:knaster}

By a \emph{Knaster continuum}, we mean a continuum which is homeomorphic to an inverse limit $\varprojlim \left\langle I,f_i \right\rangle$, where each bonding map $f_i$ is an open map which is not a homeomorphism.  In fact, the maps $f_i$ may be taken to be tent maps $T_{m_i}$ \cite[Theorem 7]{rogers1970}; see Definition~\ref{def:Tm} below for the definition of $T_m$.  Each Knaster continuum is indecomposable, and has either one or two endpoints.

Note that for an open map $f \colon I \to I$, we have that $f(\{0,1\}) = \{0,1\}$, and a point $x \in I$ is an extremum of $f$ if and only if $f(x) \in \{0,1\}$.  Moreover, given a subinterval $A \subset I$, if the interior of $A$ contains no extrema of $f$, then $f {\restriction}_A$ is one-to-one, and if $A$ contains at least two extrema, then $f(A) = I$.

\begin{lem}
\label{lem:Knaster composants}
Let $K = \varprojlim \left\langle I,f_i \right\rangle$ be a Knaster continuum, where each $f_i$ is an open map, and let $\mathbf{x} = \langle x_i \rangle_{i=1}^\infty$ and $\mathbf{y} = \langle y_i \rangle_{i=1}^\infty$ be two points of $K$.  Then $\mathbf{x}$ and $\mathbf{y}$ belong to different composants of $K$ if and only if for infinitely many $i$ there is an extremum of $f_i$ between $x_{i+1}$ and $y_{i+1}$.
\end{lem}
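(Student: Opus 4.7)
I would prove both directions by contrapositive, using the standard fact that in the indecomposable continuum $K$, two points lie in the same composant if and only if some proper subcontinuum of $K$ contains both. Writing $B_i = [\min(x_i,y_i),\max(x_i,y_i)]$, the condition ``extremum of $f_i$ between $x_{i+1}$ and $y_{i+1}$'' means an extremum of $f_i$ in $\mathrm{int}(B_{i+1})$.

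For the ``if'' direction, assume that for some $N$ and every $i \geq N$ the interior of $B_{i+1}$ contains no extremum of $f_i$. By the remarks preceding the lemma, $f_i\restriction_{B_{i+1}}$ is then one-to-one, so $f_i(B_{i+1}) = B_i$; moreover $B_{i+1} \neq I$, since otherwise the open non-homeomorphism $f_i$ would have no extremum in $(0,1)$. I would set $A_i = B_i$ for $i \geq N$ and $A_i = f_i(A_{i+1})$ recursively for $i < N$; then $C := \varprojlim \langle A_i, f_i\restriction_{A_{i+1}} \rangle$ embeds naturally as a subcontinuum of $K$ containing $\mathbf{x}$ and $\mathbf{y}$, and it is proper because $\pi_{N+1}(C) = B_{N+1} \neq I$.

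For the ``only if'' direction, assume $\mathbf{x}, \mathbf{y}$ lie in a common proper subcontinuum $C \subsetneq K$, and let $A_i = \pi_i(C) \supseteq B_i$, so $f_i(A_{i+1}) = A_i$. A basic-open-set argument in the inverse limit produces some $i_0$ with $A_{i_0} \neq I$; surjectivity of each $f_i$ then propagates this to $A_i \neq I$ for every $i \geq i_0$. By the second preliminary remark, each $A_{i+1}$ with $i \geq i_0$ contains at most one extremum of $f_i$. The heart of the proof is to upgrade this to: $\mathrm{int}(A_{i+1})$ contains \emph{no} extremum of $f_i$ for all sufficiently large $i$, which in turn forces the same for the smaller interior $\mathrm{int}(B_{i+1}) \subseteq \mathrm{int}(A_{i+1})$. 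Suppose for contradiction that some $i^* \geq i_0$ admits an extremum $e \in \mathrm{int}(A_{i^*+1})$. Then $e$ is the only extremum of $f_{i^*}$ in $A_{i^*+1}$, so the endpoints of $A_{i^*+1}$ are non-extrema; combined with $f_{i^*}(\{0,1\}) \subseteq \{0,1\}$ this forces $A_{i^*+1} \subseteq (0,1)$. I would then induct on $k \geq 1$: assuming $A_{i^*+k} \subseteq (0,1)$, the equation $f_{i^*+k}(A_{i^*+k+1}) = A_{i^*+k}$ gives $A_{i^*+k+1} \cap f_{i^*+k}^{-1}(\{0,1\}) = \emptyset$, which simultaneously yields $A_{i^*+k+1} \subseteq (0,1)$ (since $0,1 \in f_{i^*+k}^{-1}(\{0,1\})$) and the absence of any extremum of $f_{i^*+k}$ in $A_{i^*+k+1}$. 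Hence $\mathrm{int}(A_{j+1})$ is extremum-free for every $j > i^*$, as required.

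The hard part will be the final bootstrap: the ``at most one extremum'' bound does not by itself prevent the lone extremum from lying in $\mathrm{int}(A_{i+1})$. The nontrivial input that drives the induction is the rigidity property $f_i(\{0,1\}) \subseteq \{0,1\}$ of open interval maps, which propagates the confinement $A_j \subseteq (0,1)$ forward through the inverse system once it first appears.
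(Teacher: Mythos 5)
Your proposal is correct, and the first direction (finitely many interior extrema implies same composant) is essentially the paper's argument: exhibit a proper subcontinuum as an inverse limit of the subintervals $B_i$. The second direction is where you diverge. The paper argues directly: assuming infinitely many extremal indices, it takes an arbitrary subcontinuum $Y$ containing $\mathbf{x}$ and $\mathbf{y}$, picks two extremal indices $i_1 < i_2 \geq i_0$, and shows $f_{i_1}^{i_2+1}(A_{i_2+1}) = I$ by combining the fact that touching $\{0,1\}$ is preserved backward (since $f_i(\{0,1\}) \subseteq \{0,1\}$) with the presence of the $i_1$-level extremum between $x_{i_1+1}$ and $y_{i_1+1}$; this forces $\pi_{i_0}(Y) = I$ for every $i_0$, so $Y = K$. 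You instead prove the contrapositive: from a proper subcontinuum $C$ you get $A_i = \pi_i(C) \neq I$ for large $i$, hence at most one extremum of $f_i$ in $A_{i+1}$, and then you run a confinement bootstrap showing that once an interior extremum appears at some level $i^*$, $A_{i^*+1}$ must sit inside $(0,1)$ and this confinement propagates forward and kills all later extrema. Both arguments are valid; note though that your phrase ``suppose for contradiction'' is misleading --- you never reach a contradiction, but rather show that after the first interior extremum there are none, which suffices. One reason the paper may prefer its route is that the same ``two-extremal-indices, push backward to get a full image'' argument is reused almost verbatim in the subsequent Corollary~\ref{cor:comp extrema} to produce arbitrarily many extrema, whereas your bootstrap does not directly yield that refinement.
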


\begin{proof}
For each $i \in \mathbb{N}$, let $A_i$ be the closed interval with endpoints $x_i$ and $y_i$.

First, suppose that there exists $N \in \mathbb{N}$ such that for all $i \geq N$, $A_{i+1}$ contains no extremum of $f_i$ in its interior.  Then for all $i \geq N$, $A_{i+1} \neq I$, and $f_i$ is one-to-one on $A_{i+1}$ with $f_i(A_{i+1}) = A_i$.  Hence, $\varprojlim \left\langle A_i,f_i \right\rangle_{i \geq N}$ is a proper subcontinuum (an arc) of $\varprojlim \left\langle I,f_i \right\rangle_{i \geq N} \approx K$ containing $\mathbf{x}$ and $\mathbf{y}$.  Thus, $\mathbf{x}$ and $\mathbf{y}$ are in the same composant of $K$.

Conversely, suppose that for infinitely many $i$ there is an extremum of $f_i$ between $x_{i+1}$ and $y_{i+1}$.  Let $Y$ be a subcontinuum of $K$ containing $\mathbf{x}$ and $\mathbf{y}$.  Let $i_0 \in \mathbb{N}$ be arbitrary.  Choose $i_0 \leq i_1 < i_2$ such that there is an extremum of $f_{i_1}$ between $x_{i_1+1}$ and $y_{i_1+1}$ and an extremum of $f_{i_2}$ between $x_{i_2+1}$ and $y_{i_2+1}$.  Since $A_{i_2+1}$ contains an extremum of $f_{i_2}$, we have that $f_{i_2}(A_{i_2+1})$ contains $0$ or $1$.  In fact, since $f_i(\{0,1\}) \subseteq \{0,1\}$ for all $i$, it follows that $f_i^{i_2+1}(A_{i_2+1})$ contains $0$ or $1$ for each $i \leq i_2$.  In particular, $f_{i_1+1}^{i_2+1}(A_{i_2+1})$ contains $0$ or $1$.  Additionally, $f_{i_1+1}^{i_2+1}(A_{i_2+1})$ contains $x_{i_1+1}$ and $y_{i_1+1}$, hence it contains the extrema between them.  We deduce that $f_{i_1}^{i_2+1}(A_{i_2+1}) = I$.  Therefore, $\pi_{i_0}(Y) \supseteq f_{i_0}^{i_2+1}(A_{i_2+1}) = I$.  Since $i_0$ was arbitrary, we conclude that $\pi_i(Y) = I$ for all $i \in \mathbb{N}$.  Thus, $Y = K$, and so $\mathbf{x}$ and $\mathbf{y}$ are in the different composants of $K$.
\end{proof}

\begin{cor}
\label{cor:comp extrema}
Let $K = \varprojlim \left\langle I,f_i \right\rangle$ be a Knaster continuum, where each $f_i$ is an open map, and let $\mathbf{x} = \langle x_i \rangle_{i=1}^\infty$ and $\mathbf{y} = \langle y_i \rangle_{i=1}^\infty$ be two points of $K$ which lie in different composants of $K$.  Then for each $N$ and $i$, there exists $k_0 > i$ such that for all $k \geq k_0$, there are at least $N$ extrema between $x_k$ and $y_k$ in $f_i^k$.
\end{cor}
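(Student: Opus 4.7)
The plan is to prove by induction on $N$ a slight strengthening: whenever $i \leq j_1 < j_2 < \cdots < j_N$ are indices for which each $f_{j_\ell}$ has an extremum strictly between $x_{j_\ell+1}$ and $y_{j_\ell+1}$, then for every $k > j_N$ the composition $f_i^k$ will have at least $N$ extrema strictly between $x_k$ and $y_k$. Given this, the corollary is immediate: by Lemma~\ref{lem:Knaster composants} there are infinitely many such ``bad levels'' $j$, so I will pick $N$ of them above $i$ and set $k_0 = j_N+1$. Throughout the argument I will write $A_j$ for the closed subinterval of $I$ with endpoints $x_j$ and $y_j$ and use repeatedly that $f_m^k(A_k)$ is a connected subset of $I$ containing both $x_m$ and $y_m$, hence contains $A_m$, whenever $m < k$.

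For the base case $N=1$, since $f_{j_1+1}^k(A_k) \supseteq A_{j_1+1}$, the interior extremum $e$ of $f_{j_1}$ in $A_{j_1+1}$ has a preimage $z$ in the interior of $A_k$ under $f_{j_1+1}^k$ by the intermediate value theorem; I will then observe that $f_i^k(z) = (f_i \circ \cdots \circ f_{j_1-1})(f_{j_1}(e)) \in \{0,1\}$ because $f_{j_1}(e) \in \{0,1\}$ and every open self-map of $I$ sends $\{0,1\}$ into $\{0,1\}$. For the inductive step from $N$ to $N+1$, I will factor $f_i^k = g \circ h$ with $g := f_i^{j_{N+1}}$ and $h := f_{j_{N+1}}^k$, apply the inductive hypothesis to $g$ with the bad levels $j_1, \ldots, j_N$ to produce $N$ distinct extrema $e_1', \ldots, e_N'$ of $g$ lying strictly between $x_{j_{N+1}}$ and $y_{j_{N+1}}$, lift each $e_\ell'$ to a preimage $z_\ell$ in the interior of $A_k$ by applying IVT to $h$ (which is possible since $h(A_k) \supseteq A_{j_{N+1}}$ and $e_\ell'$ lies strictly between $h(x_k)$ and $h(y_k)$), and repeat the base-case construction at level $j_{N+1}$ to obtain one further point $z_0$ in the interior of $A_k$ with $h(z_0) \in \{0,1\}$. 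Since $g(e_\ell'), g(h(z_0)) \in \{0,1\}$, each of $z_0, z_1, \ldots, z_N$ will be an extremum of $f_i^k$ in the interior of $A_k$.

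The crux of the argument, and essentially the only obstacle, is verifying that $z_0, z_1, \ldots, z_N$ are pairwise distinct, so that the construction really yields $N+1$ extrema rather than fewer. For this I will observe that the values $h(z_1) = e_1', \ldots, h(z_N) = e_N'$ are pairwise distinct by the inductive hypothesis, and that each $e_\ell'$ lies strictly between $x_{j_{N+1}}$ and $y_{j_{N+1}}$, hence in $(0,1)$, whereas $h(z_0) \in \{0,1\}$; consequently $h$ separates $z_0$ from $z_1, \ldots, z_N$ as well. The remainder of the argument is a routine combination of the intermediate value theorem with the preservation of $\{0,1\}$ under open self-maps of $I$.
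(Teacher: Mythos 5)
Your proof is correct, but it takes a genuinely different route from the paper's. The paper first picks an intermediate level $i_0 > i$ large enough that the single composition $f_i^{i_0}$ already has at least $N$ extrema (possible since each $f_j$ is open and not a homeomorphism, so the number of monotone branches grows under composition), and then reuses the argument from the proof of Lemma~\ref{lem:Knaster composants} to produce $k_0$ with $f_{i_0}^{k_0}(A_{k_0}) = I$; for $k \geq k_0$ this forces $f_{i_0}^k(A_k) = I$, so a full branch of $f_{i_0}^k$ lies inside $A_k$ and $f_i^k = f_i^{i_0} \circ f_{i_0}^k$ inherits all $N$ extrema of $f_i^{i_0}$ on that branch. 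Your argument never needs $A_k$ to surject onto $I$: instead you accumulate one interior extremum per ``bad level" $j$ (a level where $f_j$ has an interior extremum in $A_{j+1}$), pulling each critical value back along $f_{j+1}^k$ by the intermediate value theorem, and use the fact that a point of an open interval map is an extremum exactly when its image lies in $\{0,1\}$ to verify that these pullbacks are indeed critical points of $f_i^k$; the distinctness argument via the intermediate map $h$ is the essential bookkeeping and it is handled correctly. Both proofs draw on Lemma~\ref{lem:Knaster composants} for the supply of infinitely many bad levels; the paper's version is shorter once the onto-ness is in hand, while yours is more elementary and local, avoiding the need to argue that $A_k$ eventually covers a full branch.
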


\begin{proof}
Choose $i_0 > i$ large enough so that $f_i^{i_0}$ has at least $N$ extrema.  As in the proof of Lemma~\ref{lem:Knaster composants}, there exists $k_0 \geq i_0$ (there $k_0 = i_2+1$) such that $f_{i_0}^{k_0}(A_{k_0}) = I$, where $A_{k_0}$ is the closed interval with endpoints $x_{k_0}$ and $y_{k_0}$.  It follows easily that for all $k \geq k_0$, there are at least $N$ extrema between $x_k$ and $y_k$ in $f_i^k = f_i^{i_0} \circ f_{i_0}^k$.
\end{proof}

\section{Factorization of tent maps}
\label{sec:tent factorization}

In this Section we collect some technical results about some interval maps, specifically tent maps, which will be used as ingredients in the proofs of the main results in the next Section.  The key idea here is encapsulated in Definition~\ref{def:Tm factorization}, which provides a factorization of a tent map (see Lemma~\ref{lem:Tm factorization}) which is useful for constructing alternative inverse limit representations of Knaster continua in which certain visors are removable.

\begin{defn}
Let $f \colon I \to I$ be a map.  An interval $P = [a,b] \subseteq I$ is called a \emph{sawtooth pattern of $f$} if there are $h \in (0,1]$ and $m \geq 1$ such that
\begin{itemize}
\item $f \left( a + j \cdot \frac{b-a}{m} \right) = 0$ for all even $j \in \{0,\ldots,m\}$,
\item $f \left( a + j \cdot \frac{b-a}{m} \right) = h$ for all odd $j \in \{0,\ldots,m\}$, and
\item $f$ is linear on $\left[ a + j \cdot \frac{b-a}{m}, a + (j+1) \cdot \frac{b-a}{m} \right]$ for each $j \in \{0,\ldots,m-1\}$.
\end{itemize}
We say that the sawtooth pattern $P$ has $\frac{m}{2}$ \emph{teeth} and \emph{height} $h$.  A sawtooth pattern with one tooth will be called simply a \emph{tooth}.  If $P = [a,b] \subset I$ is a sawtooth pattern of $f$ with $\frac{m}{2}$ teeth, and if $j \in \left\{ 1,\ldots,\lfloor \frac{m}{2} \rfloor \right\}$, then the interval $[a+(2j-2)\cdot\frac{b-a}{m}, a+2j\cdot\frac{b-a}{m}]$ is called the \emph{$j$-th tooth of $P$}.  In the case that $m$ is odd, then the interval $[b - \frac{b-a}{m}, b]$ is called the \emph{half-tooth} of $P$.
\end{defn}

\begin{defn}
\label{def:Tm}
Let $m \geq 1$.  The \emph{$m$-tent map} is the map $T_m \colon I \to I$ such that
\begin{itemize}
\item $T_m \left( \frac{j}{m} \right) = 0$ for all even $j \in \{0,\ldots,m\}$,
\item $T_m \left( \frac{j}{m} \right) = 1$ for all odd $j \in \{0,\ldots,m\}$, and
\item $T_m$ is linear on $\left[ \frac{j}{m}, \frac{j+1}{m} \right]$ for each $j \in \{0,\ldots,m-1\}$.
\end{itemize}
Equivalently, $T_m$ is characterized by the property that $I$ itself is a sawtooth pattern of $T_m$ with $\frac{m}{2}$ teeth of height $1$.
\end{defn}

We point out that for any $m_1,m_2 \geq 1$, $T_{m_1} \circ T_{m_2} = T_{m_1 \cdot m_2}$.  In fact, more generally, we have the following simple result, which we leave to the reader.

\begin{lem}
\label{lem:tent sawtooth comp}
Let $m,m' \geq 1$.  Let $f \colon I \to I$ be a map, and suppose $P \subseteq I$ is a sawtooth pattern for $f$ with $\frac{m'}{2}$ teeth and height $\frac{i}{m}$, for some $i \in \{1,\ldots,m\}$.  Then $P$ is a sawtooth pattern for $T_m \circ f$ with $\frac{m' \cdot i}{2}$ teeth and height $1$.

In particular, if $P$ is a tooth for $f$ of height $\frac{i}{m}$, then $P$ is a sawtooth pattern for $T_m \circ f$ with $i$ teeth and height $1$.
\end{lem}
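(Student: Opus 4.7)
The plan is to exhibit the natural sub-division of $P$ and verify directly that it realizes $P$ as a sawtooth pattern for $T_m \circ f$. Write the $m'+1$ sawtooth vertices of $f$ on $P = [a,b]$ as $a_k := a + k(b-a)/m'$ for $k \in \{0,1,\ldots,m'\}$, with $f(a_k) = 0$ for even $k$ and $f(a_k) = h = i/m$ for odd $k$, and with $f$ linear on each $[a_k,a_{k+1}]$. Then $f$ maps each piece $[a_k, a_{k+1}]$ affinely onto $[0, i/m]$. Sub-divide each such piece into $i$ equal sub-intervals with endpoints $a_{k,r} := a_k + r(a_{k+1} - a_k)/i$ for $r \in \{0,\ldots,i\}$; re-indexed globally via $j = ki + r$, these form the $m'i + 1$ candidate sawtooth vertices $p_j := a + j(b-a)/(m'i)$, which will witness a sawtooth pattern of $T_m \circ f$ of height $1$ with $m'i/2$ teeth.

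Next I would verify linearity of $T_m \circ f$ on each sub-interval $[p_j, p_{j+1}]$. Since $f$ is affine on $[a_k, a_{k+1}]$ and sends it onto $[0, i/m]$, it sends the smaller sub-interval $[a_{k,r}, a_{k,r+1}]$ affinely onto the interval with endpoints $r/m$ and $(r+1)/m$, on which $T_m$ is linear by the very definition of the tent map. Hence $T_m \circ f$ is linear on each $[p_j, p_{j+1}]$.

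It remains to check the alternation of endpoint values. If $k$ is even, $f$ is increasing on $[a_k, a_{k+1}]$, so $f(a_{k,r}) = r/m$, giving $T_m(f(a_{k,r})) = 0$ or $1$ according as $r$ is even or odd; if $k$ is odd, $f$ is decreasing, so $f(a_{k,r}) = (i-r)/m$, and $T_m(f(a_{k,r}))$ is $0$ or $1$ according as $i - r$ is even or odd. A brief parity case-analysis, split on whether $i$ is even or odd (and then on $k$), shows in every case that $T_m(f(p_j)) = 0$ iff $j$ is even and $T_m(f(p_j)) = 1$ iff $j$ is odd, which is precisely the sawtooth condition with height $1$ and $m'i/2$ teeth. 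The ``In particular'' clause is then the special case $m' = 2$ (one tooth), yielding $i$ teeth and height $1$. The only mildly delicate step is the parity bookkeeping at the boundaries between consecutive original pieces; every other claim is immediate from the definitions of sawtooth pattern and tent map.
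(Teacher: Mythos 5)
Your proof is correct. The paper states this lemma with ``which we leave to the reader'', so there is no author-supplied proof to compare against; your direct verification is exactly the kind of argument the authors had in mind. The decomposition into the global vertices $p_j = a + j(b-a)/(m'i)$, the observation that $f$ carries each $[p_j,p_{j+1}]$ affinely onto an interval of the form $[\ell/m,(\ell+1)/m]$ on which $T_m$ is linear, and the parity check that $T_m(f(p_j)) = 0$ iff $j$ is even (handled by the cases $k$ even/odd, using that $ki\equiv k i\pmod 2$ and $i-r\equiv i+r\pmod 2$) are all sound, and the boundary identifications $a_{k,i}=a_{k+1,0}$ are consistent. The ``in particular'' clause is indeed just $m'=2$.
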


\begin{defn}
\label{def:Tm factorization}
Let $m \geq 1$, and let $Z = \{z_1,\ldots,z_n\} \subset I$ with $z_1 < \cdots < z_n$.  Assume that there exist intervals $P_1,\ldots,P_n \subset I$ with mutually disjoint interiors such that:
\begin{enumerate}
\item for every $i \in \{1,\ldots,n-1\}$, $P_i$ is a sawtooth pattern for $T_m$ with $2i-1$ teeth (and height $1$), and $z_i$ is contained in the first half (the increasing part) of the $i$-th tooth of $P_i$; and
\item either (a) $P_n$ is either a sawtooth pattern for $T_m$ with $2n-1$ teeth and $z_n$ is contained in the first half (the increasing part) of the $n$-th tooth of $P_n$, or (b) $1 \in P_n$, and $P_n$ is a sawtooth pattern for $T_m$ with $n - \frac{1}{2}$ teeth, and $z_n$ is in the half-tooth of $P_n$.
\end{enumerate}
Then we define $s = s_{m,Z} \colon I \to I$ as follows:
\begin{itemize}
\item For $x \in I \smallsetminus \bigcup_{i=1}^n P_i$, let $s(x) = \frac{T_m(x)}{2n-1}$.
\item For $i \in \{1,\ldots,n-1\}$, $s {\restriction}_{P_i}$ is a tooth of height $\frac{2i-1}{2n-1}$.
\item In case (a) above, we let $s {\restriction}_{P_n}$ be a tooth of height $1$.  In case (b), we let $s {\restriction}_{P_n}$ be a linear, increasing map such that $s(P_n) = I$.
\end{itemize}
\end{defn}

See Figure~\ref{fig:knaster ex} below for an example of this map $s_{m,Z}$, for $m = 16$ and a specific set $Z$.

\begin{lem}
\label{lem:Tm factorization}
Let $m$, $Z = \{z_1,\ldots,z_n\}$, and $s$ be as in Definition~\ref{def:Tm factorization}.  Then
\[ T_{2n-1} \circ s = T_m .\]
\end{lem}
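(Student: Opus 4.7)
The plan is to verify $T_{2n-1} \circ s = T_m$ pointwise on each piece of the natural decomposition $I = \bigl(I \smallsetminus \bigcup_{i=1}^n P_i\bigr) \cup P_1 \cup \cdots \cup P_n$, using Lemma~\ref{lem:tent sawtooth comp} as the main tool to handle the $P_i$ pieces and a direct linear computation on the complement. Since the $P_i$ are specified as sawtooth patterns for $T_m$ with prescribed tooth counts and heights, and $s$ is defined so that Lemma~\ref{lem:tent sawtooth comp} produces the same sawtooth data for $T_{2n-1} \circ s$, the two continuous maps will agree on each piece.

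First I would handle $I \smallsetminus \bigcup_{i=1}^n P_i$: by definition $s(x) = T_m(x)/(2n-1)$, which lies in $[0, 1/(2n-1)]$, the first linear piece of $T_{2n-1}$, on which $T_{2n-1}$ has slope $2n-1$ and value $0$ at $0$. Hence $T_{2n-1}(s(x)) = (2n-1) \cdot T_m(x)/(2n-1) = T_m(x)$.

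Next, for each $i \in \{1,\ldots,n-1\}$, $s\!\restriction_{P_i}$ is a tooth of height $\frac{2i-1}{2n-1}$, so Lemma~\ref{lem:tent sawtooth comp} (applied with $m$ replaced by $2n-1$) gives that $P_i$ is a sawtooth pattern for $T_{2n-1} \circ s$ with $2i-1$ teeth and height $1$. By hypothesis $P_i$ is also a sawtooth pattern for $T_m$ with $2i-1$ teeth and height $1$. Since a sawtooth pattern on a fixed interval with fixed tooth count and height is uniquely determined (piecewise linear with the alternating values $0,1,0,1,\ldots$ at equally spaced nodes starting from the left endpoint), the two maps coincide on $P_i$. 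The case (a) for $P_n$ is identical with $2n-1$ in place of $2i-1$. In case (b), $s\!\restriction_{P_n}$ is the linear increasing bijection $P_n \to I$, so $T_{2n-1} \circ s\!\restriction_{P_n}$ is just $T_{2n-1}$ reparameterized onto $P_n$, giving a sawtooth pattern of $n - \tfrac{1}{2}$ teeth of height $1$, which matches $T_m\!\restriction_{P_n}$ by hypothesis.

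The only mildly delicate point is ensuring the continuity/boundary compatibility of the piecewise definition of $s$, and checking that the canonical sawtooth structures from the two sides really are identical (same orientation, same starting value $0$ at the left endpoint of each $P_i$, etc.); but this is immediate from the hypotheses that each $P_i$ is a sawtooth pattern for $T_m$ (forcing $T_m = 0$ at the endpoints of each $P_i$ when the tooth count is integral) and from the fact that $s$ is defined to be a tooth (starting and ending at $0$) on each $P_i$ with $i < n$ and in case (a). No genuine obstacle is expected; the argument is essentially a direct verification assembled from the sawtooth calculus encoded in Lemma~\ref{lem:tent sawtooth comp}.
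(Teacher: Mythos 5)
Your proposal is correct and follows essentially the same route as the paper's proof: check the complement of $\bigcup_i P_i$ by a direct slope computation, invoke Lemma~\ref{lem:tent sawtooth comp} on each $P_i$ where $s$ is a tooth, and handle case (b) for $P_n$ separately by observing that $s$ is an increasing linear bijection onto $I$. The extra remarks on uniqueness of sawtooth patterns and endpoint compatibility make explicit what the paper leaves implicit, but the argument is the same.
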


\begin{proof}
Recall that $T_{2n-1}(x) = (2n-1)x$ for every $x \in [0,\frac{1}{2n-1}]$.  So, if $x \in I \smallsetminus \bigcup_{i=1}^n P_i$, then $s(x) = \frac{T_m(x)}{2n-1} \in [0,\frac{1}{2n-1}]$, which implies $T_{2n-1} \circ s(x) = T_m(x)$.

For $P_i$ such that $s {\restriction}_{P_i}$ is a tooth of height $\frac{2i-1}{2n-1}$, by Lemma~\ref{lem:tent sawtooth comp}, $P_i$ is a sawtooth pattern of $T_{2n-1} \circ s$ with $2i-1$ teeth and height $1$.  In other words, $T_{2n-1} \circ s$ equals $T_m$ on $P_i$.

If $1 \in P_n$, and $P_n$ is a sawtooth pattern with $n - \frac{1}{2}$ teeth, then $s {\restriction}_{P_n}$ is linear, increasing, and $s(P_n) = I$.  Then $T_{2n-1} \circ s {\restriction}_{P_n}$ is obviously a sawtooth pattern with $n - \frac{1}{2}$ teeth, as is $T_m {\restriction}_{P_n}$.  So, $T_{2n-1} \circ s {\restriction}_{P_n} = T_m {\restriction}_{P_n}$.
\end{proof}

\begin{lem}
\label{lem:s images}
Let $m$, $Z = \{z_1,\ldots,z_n\}$, and $s$ be as in Definition~\ref{def:Tm factorization}.  For each $i \in \{1,\ldots,n\}$,
\[ s(z_i) \in \left[ \frac{2i-2}{2n-1}, \frac{2i-1}{2n-1} \right] .\]
In particular, the map $s$ is order-preserving on $Z$, i.e.\ $s(z_1) < \cdots < s(z_n)$.

Moreover, $s$ is increasing at $z_i$, and $T_{2n-1}$ is increasing at $s(z_i)$, for each $i \in \{1,\ldots,n\}$.
\end{lem}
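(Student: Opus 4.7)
The plan is to compute $s(z_i)$ directly by substituting the parametrization of $z_i$ as a point in the first (increasing) half of the $i$-th tooth of the sawtooth pattern $P_i$ into the explicit linear formula for $s \restriction_{P_i}$. The entire lemma reduces to carefully unpacking the definitions, with no real obstacle beyond bookkeeping.

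Concretely, for $i \in \{1,\ldots,n-1\}$, and also for $i = n$ in case (a), write $P_i = [a_i,b_i]$. Since $P_i$ is a sawtooth pattern for $T_m$ with $2i-1$ teeth, the $i$-th tooth is the subinterval $[a_i + \tfrac{i-1}{2i-1}(b_i - a_i), a_i + \tfrac{i}{2i-1}(b_i - a_i)]$, whose first half is $[a_i + \tfrac{i-1}{2i-1}(b_i - a_i), \tfrac{a_i+b_i}{2}]$. By Definition~\ref{def:Tm factorization}, $s \restriction_{P_i}$ is a tooth of height $\tfrac{2i-1}{2n-1}$, so on the first half $[a_i,\tfrac{a_i+b_i}{2}]$ of $P_i$ it is given by the linear formula
\[
s(x) = \frac{2(2i-1)}{(2n-1)(b_i-a_i)}\,(x-a_i).
\]
Plugging in the bounds $\tfrac{i-1}{2i-1}(b_i - a_i) \leq z_i - a_i \leq \tfrac{1}{2}(b_i - a_i)$ immediately yields $s(z_i) \in [\tfrac{2i-2}{2n-1},\tfrac{2i-1}{2n-1}]$. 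For case (b) with $i = n$, the map $s$ on $P_n = [a_n,1]$ is the linear map $s(x) = \tfrac{x-a_n}{1-a_n}$, and $z_n$ lies in the half-tooth $[1 - \tfrac{1-a_n}{2n-1},1]$, so a one-line computation gives $s(z_n) \in [\tfrac{2n-2}{2n-1},1]$.

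The remaining claims then follow immediately. The intervals $[\tfrac{2i-2}{2n-1},\tfrac{2i-1}{2n-1}]$, $i = 1,\ldots,n$, are pairwise disjoint (separated by gaps $[\tfrac{2i-1}{2n-1},\tfrac{2i}{2n-1}]$ of length $\tfrac{1}{2n-1}$), so $s(z_1) < s(z_2) < \cdots < s(z_n)$. That $s$ is increasing at $z_i$ follows because $z_i$ lies in the strictly increasing portion of the tooth $s \restriction_{P_i}$ (respectively, in the strictly increasing linear piece $s \restriction_{P_n}$ in case (b)). Finally, $T_{2n-1}$ is linear with slope $2n-1 > 0$ on the entire interval $[\tfrac{2i-2}{2n-1},\tfrac{2i-1}{2n-1}]$ by Definition~\ref{def:Tm}, since $2i-2$ is even and $2i-1$ is odd, so $T_{2n-1}$ takes values $0$ and $1$ respectively at these endpoints. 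Hence $T_{2n-1}$ is strictly increasing at $s(z_i)$.
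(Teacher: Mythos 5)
Your proof is correct and follows essentially the same route as the paper: identify the explicit linear formula for $s$ on the relevant piece of $P_i$, bound $z_i$'s location from the ``first half of the $i$-th tooth'' (resp.\ ``half-tooth'') condition, and substitute. The only small departure is in the final claim: you observe directly that $z_i$ lies on the increasing linear piece of $s\restriction_{P_i}$, whereas the paper deduces that $s$ is increasing at $z_i$ indirectly from the factorization $T_m = T_{2n-1}\circ s$ together with the facts that $T_m$ is increasing at $z_i$ and $T_{2n-1}$ at $s(z_i)$; both arguments are valid and equally short.
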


\begin{proof}
Let $i \in \{1,\ldots,n\}$, and let $P_i = [a_i,b_i]$.

Suppose first that $P_i$ is a sawtooth pattern for $T_m$ with $2i-1$ teeth, i.e.\ that either $i < n$, or $i = n$ and we are in case (a) of condition (2) in Definition \ref{def:Tm factorization}.  On $P_i$, $s$ is defined to be a tooth of height $\frac{2i-1}{2n-1}$.  Thus, on the first half of $P_i$, $\left[ a_i, \frac{a_i+b_i}{2} \right]$, we have $s(x) = \frac{2i-1}{2n-1} \cdot \frac{2}{b_i-a_i} \cdot (x - a_i)$.  The assumption about the location of $z_i$ within $P_i$ tells us that $z_i \in \left[ a_i + \frac{b_i - a_i}{2} \cdot \frac{2i-2}{2i-1}, a_i + \frac{b_i - a_i}{2} \right]$.

Now suppose instead that $i = n$, $b_n = 1$, and $P_n$ is a sawtooth pattern for $T_m$ with $n - \frac{1}{2}$ teeth, i.e.\ that we are in case (b) of condition (2) in Definition \ref{def:Tm factorization}.  In this case, on $P_n$, $s$ is defined to be a linear, increasing map such that $s(P_n) = I$.  That is, $s(x) = \frac{1}{1-a_n} \cdot (x - a_n)$.  The assumption about the location of $z_n$ within $P_n$ tells us that $z_n \in \left[ a_n + (1-a_n) \cdot \frac{2n-2}{2n-1}, 1 \right]$.

In either case, we obtain $s(z_i) \in \left[ \frac{2i-2}{2n-1}, \frac{2i-1}{2n-1} \right]$.  It follows immediately that $s(z_1) < \cdots < s(z_n)$.

Moreover, observe that $T_{2n-1}$ is increasing on $\left[ \frac{2i-2}{2n-1}, \frac{2i-1}{2n-1} \right]$.  So, $T_m$ is increasing at $z_i$, and $T_{2n-1}$ is increasing at $s(z_i)$.  According to Lemma~\ref{lem:Tm factorization}, $T_m = T_{2n-1} \circ s$.  Therefore, we must have that $s$ is increasing at $z_i$ as well.
\end{proof}

For the next Lemma, the reader may find it helpful to refer to refer to Figure~\ref{fig:knaster ex} below for an example featuring the key elements of this result.

\begin{lem}
\label{lem:s Tell visors removable}
Let $m$, $Z = \{z_1,\ldots,z_n\}$, and $s$ be as in Definition~\ref{def:Tm factorization}.  Let $\ell \geq 1$, and let $Z' = \{z_1',\ldots,z_n'\} \subset I$ with $z_1' < \cdots < z_n'$.  Assume that $T_\ell(z_i') = z_i$, and $T_\ell$ is increasing at $z_i'$, for every $i \in \{1,\ldots,n\}$.  Then every $Z'$-visor under $s \circ T_\ell$ is removable.
\end{lem}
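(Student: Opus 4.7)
The strategy is to exploit the simple sufficient criterion recorded just after Definition~\ref{def:visor}: it suffices to exhibit $a < v < b$ in $I$ with $(s\circ T_\ell)(a) = (s\circ T_\ell)(b) = 0$ and $(a,b)\cap Z' = \emptyset$, whereupon $\langle a,b,1\rangle$ removes $v$ (with the variant $a=0$ allowed by clause~(3) when $0\notin Z'$). Because $T_\ell$ is increasing at each $z_i'$, the point $z_i'$ lies in a unique maximal increasing branch $[L_i,R_i]$ of $T_\ell$, on which $T_\ell(x) = \ell(x-L_i)$ is a linear bijection onto $[0,1]$, and in particular $z_i' = L_i + z_i/\ell$. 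For a visor $v$ of $z_j'$, my plan is to take $b := L_j + a_j/\ell$ (the $T_\ell$-preimage of $a_j$ on the branch of $z_j'$), together with $a := L_{j-1} + b_{j-1}/\ell$ when $j\ge 2$, and $a := 0$ when $j = 1$. The structure of $s$ in Definition~\ref{def:Tm factorization} guarantees $s(a_j) = 0$ for every $j$, and $s(b_{j-1}) = 0$ whenever $j-1 < n$, so $a$ and $b$ are indeed zeros of $s\circ T_\ell$.

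The key technical ingredient is the inequality $s(u) < s(z_j)$ for every $u \in [0, z_j)$ when $j \ge 2$. This follows from Lemma~\ref{lem:s images} (which supplies $s(z_j) \ge (2j-2)/(2n-1)$) by a three-way case split: if $u \in P_i$ for some $i < j$, then $s(u) \le (2i-1)/(2n-1) \le (2j-3)/(2n-1)$; if $u \in P_j$ with $u<z_j$, then $s$ is strictly increasing on $[a_j, z_j]$ by the description of $s|_{P_j}$; and if $u \notin \bigcup_i P_i$, then $s(u) \le 1/(2n-1)$. Since $T_\ell$ maps $[L_j, z_j')$ onto $[0, z_j)$, this inequality yields $(s\circ T_\ell)(x) < (s\circ T_\ell)(z_j')$ on $[L_j, z_j')$, so the visor hypothesis forces $v < L_j$ whenever $j\ge 2$. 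The same estimate also rules out the possibility that $z_{j-1}'$ and $z_j'$ lie on a common branch of $T_\ell$ in the presence of a visor: any such $v$ would have to belong to that branch, giving $T_\ell(v) \in (z_{j-1}, z_j)$ and contradicting the visor inequality.

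With $v < L_j$ in hand, the remaining verifications are routine. The bound $b > v$ follows from $v < L_j \le b$; the bound $a < v$ holds either by monotonicity of $T_\ell$ on the branch of $z_{j-1}'$ (using $T_\ell(a) = b_{j-1} < z_j < T_\ell(v)$) or because $v$ lies on a strictly later branch than $z_{j-1}'$; and the containment $(a,b) \subseteq (z_{j-1}', z_j')$, together with consecutivity of these points in $Z'$, yields $(a,b)\cap Z' = \emptyset$. The case $j = 1$ is handled in parallel: one first observes $v > 0$ (since $(s\circ T_\ell)(v) > s(z_1) \ge 0$), and a short analysis of $s|_{P_1}$ shows that any visor must have $T_\ell(v) \in [0, a_1)$ (otherwise $T_\ell(v) \in [a_1, z_1)$ lies on the increasing part of $s|_{P_1}$ below $z_1$, contradicting the visor inequality), so $v < L_1 + a_1/\ell = b$, while $(0, b) \cap Z' = \emptyset$ because $b \le z_1'$. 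The main obstacle I anticipate is the configuration in which $z_{j-1}'$ and $z_j'$ share a branch of $T_\ell$, which would collapse my choices of $a$ and $b$ onto the same branch and potentially force $a = b$; however, this configuration is already incompatible with the existence of a visor for $z_j'$ by the argument of the previous paragraph, so the construction succeeds in all cases.
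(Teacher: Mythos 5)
Your argument is correct, and it reaches the conclusion by a genuinely different route from the paper's. The paper works globally with the composition $s \circ T_\ell$: it first observes that, by the definition of $s$, the interval $I$ decomposes into teeth of $s \circ T_\ell$ (possibly together with one terminal increasing interval), that each $z_i'$ with $i < n$ lies in the increasing half of a tooth of height $\frac{2i-1}{2n-1}$, and then uses Lemma~\ref{lem:s images} to rule out visors inside any tooth containing a $z_i'$ (as well as in the terminal interval and beyond $z_n'$); a visor is therefore confined to a tooth $[a,b]$ of $s \circ T_\ell$ disjoint from $Z'$, and $\langle a,b,1 \rangle$ removes it. You instead construct, for a visor $v$ of $z_j'$, an explicit bracketing pair: $b$ is the $T_\ell$-preimage of $a_j$ on the increasing branch of $T_\ell$ through $z_j'$, and $a$ is the $T_\ell$-preimage of $b_{j-1}$ on the branch through $z_{j-1}'$ (with $a=0$ when $j=1$). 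You pin $v$ into $(a,b) \subseteq (z_{j-1}',z_j')$ via the monotonicity estimate $s(u) < s(z_j)$ for $u<z_j$, which you correctly derive from Lemma~\ref{lem:s images}, and you correctly dispose of the degenerate case in which $z_{j-1}'$ and $z_j'$ share an increasing branch of $T_\ell$. The interval $[a,b]$ you obtain is in general much wider than the paper's (it may span many teeth of $s \circ T_\ell$), but since $s\circ T_\ell$ vanishes at $a$ and $b$ the sufficient criterion recorded after Definition~\ref{def:visor} applies all the same. The paper's decomposition approach never needs to name branch endpoints of $T_\ell$ or prove a common-branch lemma, since the tooth structure absorbs both; your construction is more hands-on and sidesteps having to justify the (slightly delicate) structural claim that every point of $I$ belongs to a tooth of $s\circ T_\ell$ or to a terminal increasing interval.
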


\begin{proof}
By the definition of $s$, either each point of $I$ belongs to a tooth of $s$, or there exists $q \in I$ such that $s$ is increasing on $[q,1]$ and $s([q,1]) = I$, and each point of $[0,q]$ belongs to a tooth of $s$.  In particular, for each $i \in \{1,\ldots,n-1\}$, the point $z_i$ is in the first half of a tooth of $s$ of height $\frac{2i-1}{2n-1}$, and also $z_n$ is either in the first half of a tooth of height $1$, or, in the latter case above, may be in the interval $[q,1]$.

It follows that, similarly, either each point of $I$ belongs to a tooth of $s \circ T_\ell$, or there exists $q' \in I$ such that $s \circ T_\ell$ is increasing on $[q',1]$ and $s \circ T_\ell([q',1]) = I$, and each point of $[0,q']$ belongs to a tooth of $s \circ T_\ell$.  Moreover, for each $i \in \{1,\ldots,n-1\}$, since $T_\ell(z_i') = z_i$ and $T_\ell$ is increasing at $z_i'$, we have that $z_i'$ is in the first half of a tooth of $s \circ T_\ell$ of height $\frac{2i-1}{2n-1}$, and also $z_n'$ is either in the first half of a tooth of height $1$, or, in the latter case above, may be in the interval $[q',1]$.

Let $i \in \{1,\ldots,n-1\}$.  Since $z_i'$ is in the first half of a tooth of $s \circ T_\ell$, we know that no point in this tooth may be a visor for $z_i'$ under $s \circ T_\ell$.  Also, since this tooth has height $\frac{2i-1}{2n-1}$, and $s \circ T_\ell(z_{i+1}') = s(z_{i+1}) \in \left[ \frac{2i}{2n-1}, \frac{2i+1}{2n-1} \right]$ by Lemma~\ref{lem:s images}, we see that no point in this tooth may be a visor for $z_{i+1}'$ under $s \circ T_\ell$ either.  Thus, the points in the teeth containing the points $z_i'$, $i \in \{1,\ldots,n-1\}$, are not $Z'$-visors under $s \circ T_\ell$.  Similarly, no point in the increasing interval containing $z_n'$ can be a $Z'$-visor under $s \circ T_\ell$, and no point greater than $z_n'$ can be a $Z'$-visor under $s \circ T_\ell$ either.

It follows that if $v$ is a visor under $s \circ T_\ell$, then $v$ belongs to a tooth $[a,b]$ of $s \circ T_\ell$ which does not contain any of the points of $Z'$.  Therefore, $v$ is removable; specifically, $\langle a,b,1 \rangle$ removes $v$.
\end{proof}

\section{Plane embeddings of Knaster continua}
\label{sec:knaster embeddings}

\begin{thm}
\label{thm:knaster ctble composants}
Let $K$ be a Knaster continuum, and let $\{\kappa_1,\kappa_2,\ldots\}$ be any countable set of composants of $K$.  Then there exists an embedding $\Omega \colon K \to \mathbb{R}^2$ such that for every $n \in \mathbb{N}$, $\Omega(\kappa_n)$ is a fully accessible composant of $\Omega(K)$.
\end{thm}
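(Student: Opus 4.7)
The plan is to apply Proposition~\ref{prop:embed ctble set accessible} with $\mathbf{Z} = \{\mathbf{z}_1,\mathbf{z}_2,\ldots\}$ after constructing a suitable inverse-limit representation of $K$, and then to upgrade accessibility of each $\mathbf{z}_n$ to full accessibility of its composant via \cite[Lemma~A]{debski-tymchatyn1993}, quoted in the introduction. To set up, for each composant $\kappa_n$ in the given countable collection, pick a point $\mathbf{z}_n \in \kappa_n$ that is not an endpoint of $K$; this is possible because each $\kappa_n$ is uncountable while $K$ has at most two endpoints. Represent $K = \varprojlim \langle I, T_{m_j} \rangle$ via tent maps using \cite[Theorem~7]{rogers1970}, and write $\mathbf{z}_n = (z_{n,1}, z_{n,2}, \ldots)$.

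Next, I build inductively a new inverse-limit representation $K \cong \varprojlim \langle I, f_i \rangle$ satisfying the hypotheses of Proposition~\ref{prop:embed ctble set accessible}. Having chosen levels $1 = j_1 < \cdots < j_i$ in the tent-map sequence and bonding maps $f_1, \ldots, f_{i-1}$, use Corollary~\ref{cor:comp extrema} to pick $j_{i+1}$ so large that the points $\pi_{j_{i+1}}(\mathbf{z}_1), \ldots, \pi_{j_{i+1}}(\mathbf{z}_i)$ are distinct and lie in the open interval $(0,1)$, and so that the composed tent map $T_{M_i} = T_{m_{j_i}} \circ \cdots \circ T_{m_{j_{i+1}-1}}$ (where $M_i = \prod_{k=j_i}^{j_{i+1}-1} m_k$) has enough extrema between consecutive points of $Z_i = \pi_{j_{i+1}}(\{\mathbf{z}_1, \ldots, \mathbf{z}_i\})$ to accommodate sawtooth patterns $P_1, \ldots, P_i$ as prescribed by Definition~\ref{def:Tm factorization}, with $\pi_{j_{i+1}}(\mathbf{z}_k)$ in the first half of the $k$-th tooth of $P_k$. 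Apply Lemma~\ref{lem:Tm factorization} to obtain the factorization $T_{M_i} = T_{2i-1} \circ s_{M_i, Z_i}$, and take $f_i$ to be $s_{M_i, Z_i}$, pre-composed if necessary with a tent-map factor $T_{\ell_i}$; the resulting consecutive compositions $f_{i-1} \circ f_i$ in the new sequence can be arranged, via the composing-bonding-maps equivalence from Section~\ref{sec:prelim}, to equal the original tent-map compositions, so the new inverse limit is again $K$.

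The hypotheses of Proposition~\ref{prop:embed ctble set accessible} hold at each level: condition (1) follows from the choice of $j_{i+1}$, condition (2) (order preservation by $f_i$) follows from Lemma~\ref{lem:s images}, and condition (3) (removability of every $Z_i$-visor under $f_i$) follows from Lemma~\ref{lem:s Tell visors removable}. Proposition~\ref{prop:embed ctble set accessible} then yields an embedding $\Omega \colon K \to \mathbb{R}^2$ such that every $\Omega(\mathbf{z}_n)$ is an accessible point. Since each $\mathbf{z}_n$ is not an endpoint of $K$, \cite[Lemma~A]{debski-tymchatyn1993} implies $\Omega(\kappa_n)$ is fully accessible for every $n$, completing the proof.

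The main technical obstacle is the inductive bookkeeping in the construction of the new inverse sequence: matching the $s$-map bonding maps into a coherent sequence whose limit is homeomorphic to $K$, while simultaneously ensuring the order-preservation and visor-removability conditions hold at every bonding map. The factorization identity of Lemma~\ref{lem:Tm factorization} supplies the structural link between the new $s$-maps and the original tent maps, while the extremum-separation of Corollary~\ref{cor:comp extrema} supplies the geometric freedom to draw the required sawtooth patterns around each projection. Pinning down the precise form of the $f_i$, i.e., exactly how the tent-map factors are grouped with the $s$-maps so that order preservation holds at every level, is the most delicate part of the argument.
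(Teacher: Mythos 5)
Your overall strategy is the same as the paper's: pick a non-endpoint $\mathbf{z}_n$ in each $\kappa_n$, pass to a tent-map representation, regroup the tent-map blocks into factorizations $T_{M_i} = T_{2i-1}\circ s_i$ via Definition~\ref{def:Tm factorization}, define $f_i = s_i \circ T_{2i+1}$, apply Lemmas~\ref{lem:s images} and \ref{lem:s Tell visors removable} and then Proposition~\ref{prop:embed ctble set accessible}, and finally invoke \cite[Lemma~A]{debski-tymchatyn1993}. However, there is a genuine gap in your selection of the levels $j_i$.

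You propose to choose the levels greedily, picking $j_{i+1}$ ``so large'' that the projections $\pi_{j_{i+1}}(\mathbf{z}_1),\ldots,\pi_{j_{i+1}}(\mathbf{z}_i)$ are distinct, interior, and separated by enough extrema. But Corollary~\ref{cor:comp extrema} and the choice of a large index do not, by themselves, supply the two remaining hypotheses that the subsequent lemmas really need: that $g_{j_i}^{j_{i+1}}$ is \emph{order-preserving} on the projected set, and that it is \emph{increasing at} each projected point. Both are essential---the first so that the $s$-maps piece together into an order-preserving sequence (condition (2) of Proposition~\ref{prop:embed ctble set accessible}), the second precisely because Lemma~\ref{lem:s Tell visors removable} requires $T_\ell$ to be increasing at each $z_i'$. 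Neither property is automatic for sufficiently large $j_{i+1}$: the relative order of the projections $\pi_j(\mathbf{z}_1),\ldots,\pi_j(\mathbf{z}_i)$ can vary with $j$, and having committed earlier to a level $j_i$ at which the ordering is transient (never repeated at higher levels), you could well find that no later $j_{i+1}$ makes $g_{j_i}^{j_{i+1}}$ order-preserving on the projections. Similarly, the bonding maps can alternate between increasing and decreasing at a projected point.

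The paper handles this by carrying forward, at each stage $i$, a nested \emph{infinite} index set $J_i$ (Claim~\ref{claim:subsequences}): a pigeonhole passage to a subsequence where the ordering of $\pi_j(\mathbf{Z}_i)$ stabilizes, then a further passage (taking every other term if needed) to force the ``increasing at'' condition, and only then fixing $j_i(1)$ and $j_i(2)$. It also separately secures condition (6) of the Claim, which handles the rightmost coordinate and the choice between cases (a) and (b) in Definition~\ref{def:Tm factorization}---another point your outline does not address. Without this nested-subsequence argument, the recursion can get stuck, so the greedy choice of levels as you describe it does not suffice. If you replace your level-selection step with a pigeonhole/diagonal construction that keeps an infinite pool of usable indices around at each stage (or simply cite a claim of the form of Claim~\ref{claim:subsequences}), the rest of your derivation goes through exactly as you outline.
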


\begin{proof}
We may assume that the set $\{\kappa_1,\kappa_2,\ldots\}$ is infinite, and that $\kappa_{n_1} \neq \kappa_{n_2}$ whenever $n_1 \neq n_2$.

Express $K$ as an inverse limit of tent maps; i.e.\ assume $K = \varprojlim \left\langle I,g_i \right\rangle$, where for each $i \in \mathbb{N}$, $g_i = T_{m_i}$ for some integer $m_i \geq 2$.  For each $i \in \mathbb{N}$, let $\pi_i \colon K = \varprojlim \left\langle I,g_i \right\rangle \to I$ be the $i$-th coordinate projection, $\pi_i(\langle x_1,x_2,\ldots \rangle) = x_i$.

For each $n \in \mathbb{N}$, let $\mathbf{z}_n \in \kappa_n$ be a point which is not an endpoint of $K$.  In light of \cite[Lemma A]{debski-tymchatyn1993}, to prove the Theorem, it suffices to produce an embedding $\Omega \colon K \to \mathbb{R}^2$ such that for every $n \in \mathbb{N}$, $\Omega(\mathbf{z}_n)$ is an accessible point of $\Omega(K)$.  We will apply Proposition~\ref{prop:embed ctble set accessible} to obtain such an embedding, but to do this we need to construct an alternative inverse limit representation of $K$, $K \approx \varprojlim \left\langle I,f_i \right\rangle$, satisfying the conditions of Proposition~\ref{prop:embed ctble set accessible}.  The remainder of this proof is devoted to a recursive construction, encapsulated in the following Claim, after which we will define the maps $f_i$, $i \in \mathbb{N}$.

\begin{claim}
\label{claim:subsequences}
There exist infinite sets $J_i = \{j_i(1),j_i(2),\ldots\} \subseteq \mathbb{N}$, $i \in \mathbb{N}$, enumerated in increasing order $j_i(1) < j_i(2) < \cdots$, such that for each $i \in \mathbb{N}$, letting $\mathbf{Z}_i = \{\mathbf{z}_1,\ldots,\mathbf{z}_i\}$, we have:
\begin{enumerate}
\item $J_{i+1} \subseteq J_i \smallsetminus \{j_i(1)\}$;
\item for all $j \in J_i$, $\pi_j$ is one-to-one on $\mathbf{Z}_i$;
\item for all $j,j' \in J_i$ with $j < j'$, $g_j^{j'}$ is order-preserving on the set $\pi_{j'}(\mathbf{Z}_i)$;
\item for all $j,j' \in J_i$ with $j < j'$, for each $z \in \pi_{j'}(\mathbf{Z}_i)$, $g_j^{j'}$ is increasing at $z$;
\item for all $j \in J_i \smallsetminus \{j_i(1)\}$, for each two distinct points $z,z' \in \pi_j(\mathbf{Z}_i)$, $g_{j_i(1)}^j$ has at least $4i$ extrema between $z$ and $z'$; and
\item for all $j \in J_i \smallsetminus \{j_i(1)\}$, for each $z \in \pi_j(\mathbf{Z}_i)$, either $g_{j_i(1)}^j$ has at least $2i$ extrema between $z$ and $1$, or $g_{j_i(1)}^j$ is increasing on $[z,1]$.
\end{enumerate}
\end{claim}

\begin{proof}[Proof of Claim~\ref{claim:subsequences}]
\renewcommand{\qedsymbol}{\textsquare (Claim~\ref{claim:subsequences})}
For the sake of the recursion, let $J_0 = \mathbb{N}$, with $j_0(1) = 1$.

Now let $i \in \mathbb{N}$, and suppose that $J_{i-1}$ has been defined so that properties (1)--(6) hold.  Choose
\begin{enumerate}[label=(\arabic{*}$'$), series=rec_prime]
\item $a_0 > j_{i-1}(1)$
\end{enumerate}
large enough so that $\pi_{a_0}$ is one-to-one on $\mathbf{Z}_i$.  It follows that
\begin{enumerate}[resume*=rec_prime]
\item $\pi_j$ is one-to-one on $\mathbf{Z}_i$ for all $j \geq a_0$.
\end{enumerate}
By the pigeonhole principle, there exists an infinite set $A_1 \subseteq J_{i-1} \smallsetminus \{j_{i-1}(1)\}$ such that for all $j \in A_1$, $j \geq a_0$, and for all $j' \in A_1$ with $j < j'$, the order of the points $\pi_j(\mathbf{z}_1),\ldots,\pi_j(\mathbf{z}_i)$ (in the sense of their relative positions in $I$) is the same as the order of the points $\pi_{j'}(\mathbf{z}_1),\ldots,\pi_{j'}(\mathbf{z}_i)$, meaning that
\begin{enumerate}[resume*=rec_prime]
\item $g_j^{j'}$ is order-preserving on the set $\pi_{j'}(\mathbf{Z}_i)$.
\end{enumerate}
Let $A_1'$ be the set of all $a \in A_1$ such that $g_a^{a^+}$ is decreasing at $\pi_{a^+}(\mathbf{z}_i)$, where $a^+$ denotes the smallest element of $A_1$ which is greater than $a$.  If $A_1'$ is finite, then by removing finitely many elements of $A_1$, we obtain a subset $A_2 \subseteq A_1$ such that for each consecutive pair $j,j'$ in $A_2$ with $j < j'$, $g_j^{j'}$ is increasing at $\pi_{j'}(\mathbf{z}_i)$.  On the other hand, if $A_1'$ is infinite, say $A_1' = \{a_1,a_2,\ldots\}$, where $a_1 < a_2 < \cdots$, then let $A_2 = \{a_1,a_3,a_5,\ldots\}$.  In either case, we obtain that
\begin{enumerate}[resume*=rec_prime]
\item for all $j,j' \in A_2$ with $j < j'$, $g_j^{j'}$ is increasing at $\pi_{j'}(\mathbf{z}_i)$.
\end{enumerate}
Let $p \in \{1,\ldots,i\}$ be such that $\pi_j(\mathbf{z}_p)$ is the largest element of $\pi_j(\mathbf{Z}_i)$ for each $j \in A_2$.  We consider two cases: either
\begin{enumerate}[label=(\alph{*})]
\item there exists $j \in A_2$ such that for all $j' \in A_2$ with $j' > j$, the map $g_j^{j'}$ is increasing on $[\pi_{j'}(\mathbf{z}_p),1]$; or
\item for each $j \in A_2$, there exists $j' \in A_2$ such that $j' > j$ and $g_j^{j'}([\pi_{j'}(\mathbf{z}_p),1]) = I$.
\end{enumerate}
If case (a) holds, then let $j_i(1)$ be any such $j$ as in (a).  If (b) holds, then let $j_i(1) \in A_2$ be arbitrary.  In the case that (b) holds, we point out that, as in the proof of Corollary~\ref{cor:comp extrema}, for any $N \in \mathbb{N}$, if $j' \in A_2$ is large enough, then there are at least $N$ extrema in $g_{j_i(1)}^{j'}$ between $\pi_{j'}(\mathbf{z}_p)$ and $1$.

According to Corollary~\ref{cor:comp extrema}, we may choose $j_i(2) \in \{j \in A_2: j > j_i(1)\}$ large enough so that
\begin{enumerate}[resume*=rec_prime]
\item for each two distinct points $z,z' \in \pi_{j_i(2)}(\mathbf{Z}_i)$, $g_{j_i(1)}^{j_i(2)}$ has at least $4i$ extrema between $z$ and $z'$.
\end{enumerate}
and also so that
\begin{enumerate}[resume*=rec_prime]
\item either (in case (a) above) $g_{j_i(1)}^{j_i(2)}$ is increasing on $[\pi_{j_i(2)}(\mathbf{z}_p),1]$, or (in case (b) above) $g_{j_i(1)}^{j_i(2)}$ has at least $2i$ extrema between $\pi_{j_i(2)}(\mathbf{z}_p)$ and $1$.
\end{enumerate}
Finally, let
\[ J_i = \{j_i(1)\} \cup \{j \in A_2: j \geq j_i(2)\} .\]

Properties (1)--(4) above for $J_i$ follow directly from the construction and properties (1$'$)--(4$'$).  Property (5) also follows from property (5$'$), since for any $j \in J_i$ with $j > j_i(1)$, we have $j \geq j_i(2)$, and so $g_{j_i(1)}^j = g_{j_i(1)}^{j_i(2)} \circ g_{j_i(2)}^j$, and for each two distinct points $z,z' \in \pi_j(\mathbf{Z}_i)$, there are at least as many extrema of $g_{j_i(1)}^j$ between $z$ and $z'$ as there are of $g_{j_i(1)}^{j_i(2)}$ between $g_{j_i(2)}^j(z)$ and $g_{j_i(2)}^j(z')$, which is at least $4i$ by (5$'$).  Property (6) similarly follows from property (6$'$), because for any $j \in J_i$ with $j \geq j_i(2)$ and each $z' = \pi_j(\mathbf{z}) \in \pi_j(\mathbf{Z}_i)$, $1 \in g_{j_i(2)}^j([z',1])$ due to property (4), hence $[z,1] \subseteq g_{j_i(2)}^j([z',1])$, where $z = g_{j_i(2)}^j(z') = \pi_{j_i(2)}(\mathbf{z})$.  This completes the recursive construction.
\end{proof}

Let $J = \{j_i(1): i \in \mathbb{N}\}$, and denote $j(i) = j_i(1)$, so that $J = \{j(1),j(2),\ldots\}$.  Recall that for each $k \in \mathbb{N}$, $g_k = T_{m_k}$ for some $m_k \geq 1$.  Observe that for any $i \in \mathbb{N}$, $g_{j(i)}^{j(i+1)} = T_{M_i}$, where $M_i = \prod_{k=j(i)}^{j(i+1)-1} m_k$.  By properties (4)--(6) above, we have that for each $i \in \mathbb{N}$, the map $g_{j(i)}^{j(i+1)}$ together with the set $Z = \pi_{j(i+1)}(\mathbf{Z}_i)$ satisfies the assumptions for Definition~\ref{def:Tm factorization} (using $m = M_i$ and $n = i$), so we may construct the map $s_i = s_{M_i,Z}$ as defined there, with the property that $T_{2i-1} \circ s_i = g_{j(i)}^{j(i+1)}$ (Lemma~\ref{lem:Tm factorization}).

According to Lemma~\ref{lem:s images} (using $m = M_{i+1}$, $Z = \pi_{j(i+2)}(\mathbf{Z}_{i+1})$, and $s = s_{i+1}$), $s_{i+1}$ preserves the order on the set $\pi_{j(i+2)}(\mathbf{Z}_{i+1})$, and also $T_{2i+1}$ is increasing at each point of $s_{i+1} \circ \pi_{j(i+2)}(\mathbf{Z}_{i+1})$.  By property (3), the map $g_{j(i+1)}^{j(i+2)} = T_{2i+1} \circ s_{i+1}$ also preserves the order on the set $\pi_{j(i+2)}(\mathbf{Z}_{i+1})$, and therefore $T_{2i+1}$ preserves the order on the set $s_{i+1} \circ \pi_{j(i+2)}(\mathbf{Z}_{i+1})$.  In particular, if we let $Z' = s_{i+1} \circ \pi_{j(i+2)}(\mathbf{Z}_i)$, then $T_{2i+1}$ preserves the order on $Z'$, and is increasing at each point of $Z'$.  Furthermore,
\[ T_{2i+1}(Z') = T_{2i+1} \circ s_{i+1} \circ \pi_{j(i+2)}(\mathbf{Z}_i)  = g_{j(i+1)}^{j(i+2)} \circ \pi_{j(i+2)}(\mathbf{Z}_i) = \pi_{j(i+1)}(\mathbf{Z}_i) .\]
Thus, we may apply Lemma~\ref{lem:s Tell visors removable} (using $m = M_i$, $Z = \pi_{j(i+1)}(\mathbf{Z}_i)$, $s = s_i$, $\ell = 2i+1$, and $Z' = s_{i+1} \circ \pi_{j(i+2)}(\mathbf{Z}_i)$) to conclude that in $s_i \circ T_{2i+1}$, every $s_{i+1} \circ \pi_{j(i+2)}(\mathbf{Z}_i)$-visor is removable.

For each $i \in \mathbb{N}$, define $f_i = s_i \circ T_{2i+1}$.  Then $K \approx \varprojlim \left\langle I,f_i \right\rangle$.  Denote by $\psi_i: K \to I$ the $i$-th coordinate projection with respect to this inverse limit representation of $K$, for each $i \in \mathbb{N}$.  Note that $\psi_i = s_i \circ \pi_{j(i+1)}$.  So from the previous paragraph, we have that every $\psi_{i+1}(\mathbf{Z}_i)$-visor under $f_i$ is removable.  Therefore, by Proposition~\ref{prop:embed ctble set accessible}, there is an embedding $\Omega \colon K \to \mathbb{R}^2$ such that for every $n \in \mathbb{N}$, $\Omega(\mathbf{z}_n)$ is an accessible point of $\Omega(K)$, as desired.
\end{proof}

In our final result, Example~\ref{ex:knaster attractor}, we present more new embeddings of the Knaster buckethandle continuum, which have dynamical significance.  We first recall the notion of a (compact) global attractor of a plane homeomorphism, following e.g.\ \cite{gobbino2001}.

Given a plane homeomorphism $H \colon \mathbb{R}^2 \to \mathbb{R}^2$, a compact set $A \subset \mathbb{R}^2$ is a \emph{global attractor} of $H$ if $A$ is \emph{invariant} and $A$ \emph{attracts} each bounded subset of $\mathbb{R}^2$; that is, if $H(A) = A$, and for any bounded set $P \subset \mathbb{R}^2$ and any $\varepsilon > 0$, there exists a non-negative integer $N$ such that for all $k \geq N$, $H^k(P)$ is contained in the $\varepsilon$-neighborhood of $A$.  If a plane homeomorphism has a (compact) global attractor, then it must be unique, hence we refer to it as \emph{the} global attractor.  Obviously, for an invariant compact set $A$ to be the global attractor of a plane homeomorphism, it suffices that $A$ attracts each compact subset of $\mathbb{R}^2$.

\begin{example}
\label{ex:knaster attractor}
Let $K = \underleftarrow{\lim}\langle I, T_2\rangle$ be the standard Knaster buckethandle continuum, and let $n \in \mathbb{N}$.  There exists a plane embedding $\Omega$ of $K$ such that at least $n$ composants of $\Omega(K)$ are accessible, and such that $\Omega(K)$ is the global attractor of a plane homeomorphism $H \colon \mathbb{R}^2 \to \mathbb{R}^2$ such that $H {\restriction}_{\Omega(K)}$ is conjugate to an iterate of the standard shift homeomorphism $\sigma \colon K \to K$ defined by
\[ \sigma(\langle x_1,x_2,x_3,\ldots \rangle) = \langle T_2(x_1),x_1,x_2,\ldots \rangle .\]
\end{example}

\begin{proof}
Let $k \in \mathbb{N}$ be large enough so that $2^{k-1} \geq 1 + \sum_{i=1}^n (2i-1)$.  Choose points $0 < a_0 < a_1 < \cdots < a_n \leq 1$ such that for each $i \in \{1,\ldots,n\}$, the interval $P_i = [a_{i-1},a_i]$ is a sawtooth pattern of $T_2^k = T_{2^k}$ with $2i-1$ teeth and height $1$.  For each $i \in \{1,\ldots,n\}$, let $z_i \in P_i$ be the fixed point of $T_{2^k}$ which is on the first half (the increasing part) of the $i$-th tooth of $P_i$.  Then $m = 2^k$ and $Z = \{z_1,\ldots,z_n\}$ satisfy the conditions of Definition~\ref{def:Tm factorization}, so we may define $s = s_{2^k,Z}$ as given there, so that $T_{2^k} = T_{2n-1} \circ s$ (Lemma~\ref{lem:Tm factorization}).  See Figure~\ref{fig:knaster ex} for an illustration for the case $n=2$.

\begin{figure}
\begin{center}
\includegraphics{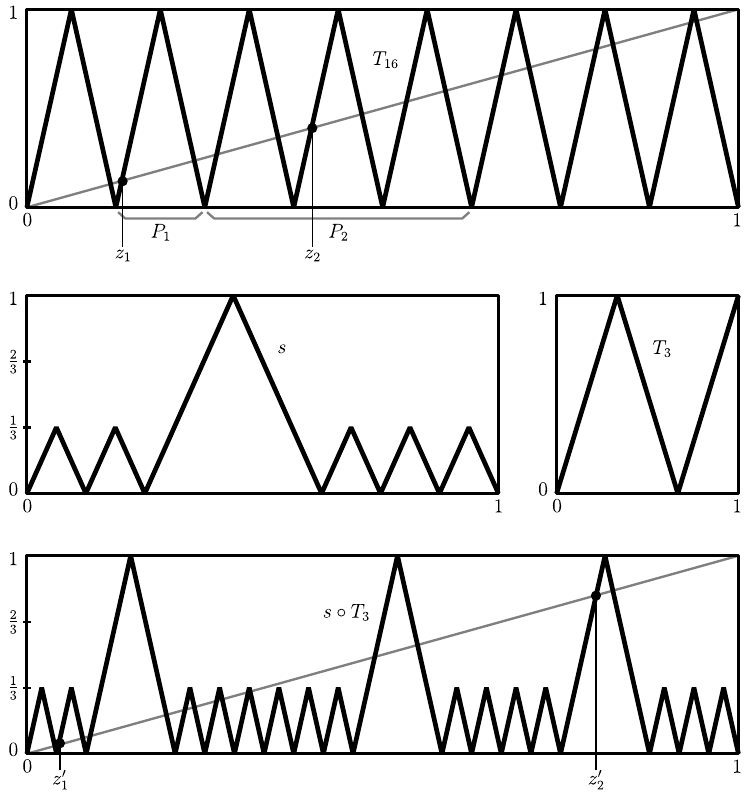}
\end{center}

\caption{First row: the tent map $T_{16}$, with a set $Z = \{z_1,z_2\}$ indicated, together with sawtooth patterns $P_1$ and $P_2$ as in Definition~\ref{def:Tm factorization}.  Second row: The map $s = s_{16,Z}$ from Definition~\ref{def:Tm factorization}, and the tent map $T_3$, which satisfy $T_{16} = T_3 \circ s$.  Third row: the composition $s \circ T_3$, with the set $Z' = \{z_1',z_2'\}$ as in the proof of Example~\ref{ex:knaster attractor} indicated.}
\label{fig:knaster ex}
\end{figure}

For each $i \in \{1,\ldots,n\}$, let $\mathbf{z}_i = \langle z_i,z_i,\ldots \rangle \in \varprojlim \left\langle I,T_{2^k} \right\rangle \approx K$.  Note that no two of these points $\mathbf{z}_i$ belong to the same composant of $K$, by Lemma~\ref{lem:Knaster composants}.

Let $f = s \circ T_{2n-1}$, and denote $K' = \varprojlim \left\langle I,f \right\rangle$.  From Lemma~\ref{lem:Tm factorization}, it follows that $K \approx K'$; in fact, an explicit homeomorphism $h \colon \varprojlim \left\langle I,T_{2^k} \right\rangle \to K'$ is given by
\[ h(\langle x_1,x_2,\ldots \rangle) = \langle s(x_1),s(x_2),\ldots \rangle .\]
Observe that the shift map $\widehat{f}$ on $K'$ induced by $f$ is conjugate (via $h$) to the $k$-th iterate of the shift map $\sigma$ on $K$ induced by $T_2$:
\begin{align*}
\widehat{f} \circ h(\langle x_1,x_2,\ldots \rangle) &= \langle f \circ s(p_1),f \circ s(p_2),\ldots \rangle) \\
&= \langle s \circ T_2^k(p_1),s \circ T_2^k(p_2),\ldots \rangle) \\
&= h \circ \sigma^k(\langle p_1,p_2,\ldots \rangle) .
\end{align*}

For each $i \in \{1,\ldots,n\}$, define $z_i' = s(z_i)$.  Let $Z' = \{z_1',\ldots,z_n'\}$, and for each $i \in \{1,\ldots,n\}$, denote $\mathbf{z}_i' = \langle z_i',z_i',\ldots \rangle$.  Notice that $h(\mathbf{z}_i) = \mathbf{z}_i'$ for each $i \in \{1,\ldots,n\}$.  By Lemma~\ref{lem:s images}, $z_1' < \cdots < z_n'$, and for each $i \in \{1,\ldots,n\}$, $T_{2n-1}$ is increasing at $z_i'$.  Also,
\[ T_{2n-1}(z_i') = T_{2n-1} \circ s(z_i) = T_{2^k}(z_i) = z_i .\]
So, by Lemma~\ref{lem:s Tell visors removable}, all $Z'$-visors under $f$ are removable.  At this point, we know by Proposition~\ref{prop:embed n pts accessible} that there is an embedding $\Omega' \colon K' \to \mathbb{R}^2$ such that for each $i \in \{1,\ldots,n\}$, $\Omega'(\mathbf{z}_i')$ is an accessible point of $\Omega'(K')$.  In any such embedding, the $n$ composants containing $\mathbf{z}_1',\ldots,\mathbf{z}_n'$ are fully accessible by \cite[Lemma A]{debski-tymchatyn1993}.  The remainder of the proof is devoted to describing such an embedding which is the global attractor of a plane homeomorphism.

We will closely follow related constructions from \cite{barge-martin1990} and \cite{boyland-carvalho-hall2021}.  We construct a closed topological disk $\mathbb{D}$ as follows.  Consider the standard unit circle $\mathbb{S}^1 = \{\langle x,y \rangle \in \mathbb{R}^2: x^2 + y^2 = 1\}$, and define the equivalence relation $\sim$ on the annulus $\mathbb{S}^1 \times [0,1]$ by $\langle x_1,y_1,t_1 \rangle \sim \langle x_2,y_2,t_2 \rangle$ if and only if $\langle x_1,y_1,t_1 \rangle = \langle x_2,y_2,t_2 \rangle$, or, $t_1 = t_2 = 1$ and $y_1 = y_2$.  Let $\mathbb{D} = \left( \mathbb{S}^1 \times [0,1] \right) / {\sim}$.

Let $J = \left( \mathbb{S}^1 \times \{1\} \right) / {\sim}$, and let $\mathbb{D}' = \left( \mathbb{S}^1 \times [\frac{1}{2},1] \right) / {\sim}$.  Then $J$ is an arc, and in fact an explicit homeomorphism $\tau \colon I \to J$ is given by
\[ \tau(p) = \left[ \left\langle \sqrt{1-(2p-1)^2},2p-1,1 \right\rangle \right] .\]
Here $[\langle x,y,t \rangle]$ denotes the $\sim$-equivalence class of $\langle x,y,t \rangle$, considered as a point in the quotient $\mathbb{D}$.  Also, $\mathbb{D}'$ is a closed topological disk, and $J \subset \intr(\mathbb{D}') \subset \mathbb{D}' \subset \intr(\mathbb{D})$.  For each $\theta \in \mathbb{S}^1$, let $\alpha_\theta = \left( \{\theta\} \times [0,1] \right) / {\sim}$, which is an arc in $\mathbb{D}$.  Let $\mathcal{A} = \{\alpha_\theta: \theta \in \mathbb{S}^1\}$.  See Figure~\ref{fig:knaster ex disk1} for an illustration.

\begin{figure}
\begin{center}
\includegraphics[width=4in]{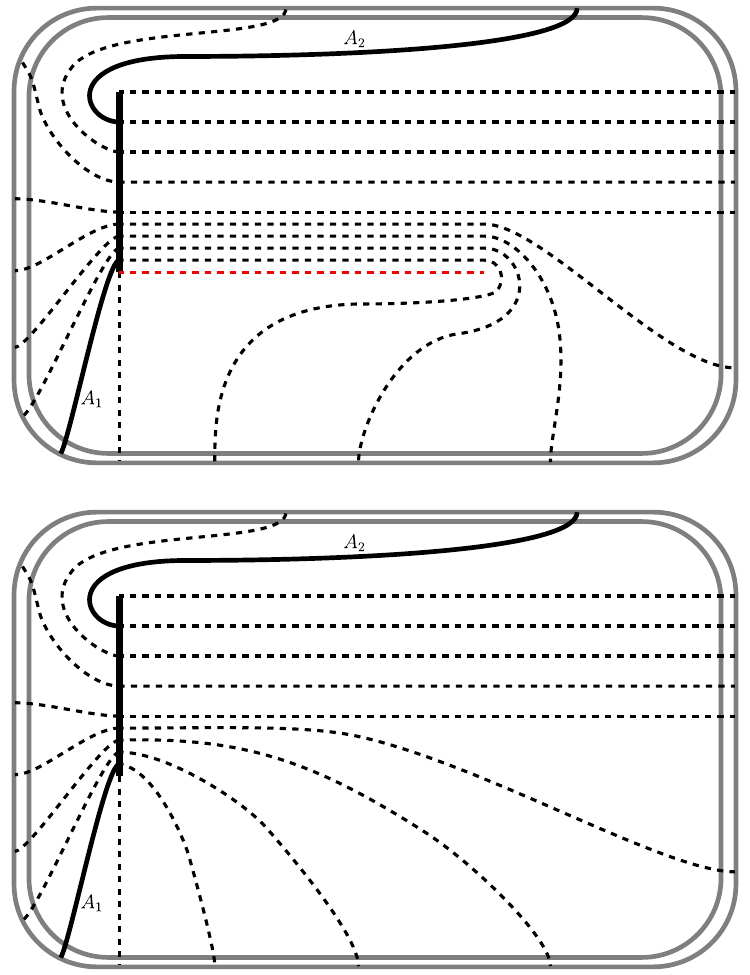}
\end{center}

\caption{Illustration of the disk $\mathbb{D}$ and family $\mathcal{A}$ of arcs in the bottom picture, and the pre-image under the map $H_2 \colon \mathbb{D} \to \mathbb{D}$ in the top picture.  The map $H_2$ collapses the red segment in the top picture to the point $\tau(0) \in J$, and is the identity on the vertical segment $J$, on the outer annulus $\mathbb{D} \smallsetminus \mathbb{D}'$, and on the two arcs $A_1$ and $A_2$.  The map $H_1 \colon \mathbb{D} \to \mathbb{D}$ is the identity on $\partial \mathbb{D}$, collapses the inner disk $\mathbb{D}'$ to the vertical segment $J$, and leaves the arcs of $\mathcal{A}$ set-wise invariant.  This particular drawing of $\mathbb{D}$, with $\mathcal{A}$ and $\mathbb{D}'$ in such shape, was chosen so as to simplify the picture in Figure~\ref{fig:knaster ex disk2} below.}
\label{fig:knaster ex disk1}
\end{figure}

Define $H_1 \colon \mathbb{D} \to \mathbb{D}$ by $H_1([\langle x,y,t \rangle]) = [\langle x,y,\min\{2t,1\} \rangle]$.  The key features of $H_1$ are:
\begin{enumerate}
\item $H_1$ is the identity on $J$ and on $\partial \mathbb{D}$;
\item $H_1(\mathbb{D}') = J$;
\item For each $\alpha \in \mathcal{A}$, $H_1(\alpha) = \alpha$;
\item For every compact set $P \subset \intr(\mathbb{D})$, there exists $N \in \mathbb{N}$ such that $H_1^N(P) \subseteq J$; and
\item $H_1$ is a near-homeomorphism.
\end{enumerate}

Next, let
\begin{align*}
\mathbb{D}_+ &= \{[\langle x,y,t \rangle] \in \mathbb{D}: x \geq 0\} \textrm{, and } \\
\mathbb{D}_- &= \{[\langle x,y,t \rangle] \in \mathbb{D}: x \leq 0\} .
\end{align*}
Observe that $J \subset \mathbb{D}_+ \cap \mathbb{D}_-$, and for each $\alpha \in \mathcal{A}$, either $\alpha \subset \mathbb{D}_+$ or $\alpha \subset \mathbb{D}_-$.  Also, for any $p \in I \smallsetminus \{0,1\}$, there exist exactly two arcs $\alpha_+,\alpha_- \in \mathcal{A}$ containing $\tau(p)$, one contained in $\mathbb{D}_+$ and one contained in $\mathbb{D}_-$; and for each $p \in \{0,1\}$, there exists exactly one arc $\alpha \in \mathbb{A}$ containing $\tau(p)$, and $\alpha \subset \mathbb{D}_+ \cap \mathbb{D}_-$.  For each $i \in \{1,\ldots,n\}$, let $A_i$ be the arc $\alpha \in \mathcal{A}$ such that $\tau(z_i') \in A_i$ and $A_i \subset \mathbb{D}_-$.

Let $H_2 \colon \mathbb{D} \to \mathbb{D}$ be a map such that
\begin{enumerate}
\item $H_2^{-1}(\tau(0))$ is a non-degenerate arc in $\mathbb{D}_+ \cap \intr(\mathbb{D}')$;
\item $H_2$ is one-to-one on $\mathbb{D} \smallsetminus H_2^{-1}(\tau(0))$;
\item $H_2$ is the identity on $J$, on each of the arcs $A_1,\ldots,A_n$, and on $\mathbb{D} \smallsetminus \mathbb{D}'$;
\item $H_2$ is a near-homeomorphism.
\end{enumerate}
See Figure~\ref{fig:knaster ex disk1} for an illustration.

Let $H_3 \colon \mathbb{D} \to \mathbb{D}$ be a homeomorphism such that:
\begin{enumerate}
\item $H_3(J) \subseteq \mathbb{D}_+ \cap \mathbb{D}'$;
\item $H_3(\tau(p)) = \tau(f(p))$ for every $p \in I$; and
\item $H_3$ is the identity on each of the arcs $A_1,\ldots,A_n$, and on $\mathbb{D} \smallsetminus \mathbb{D}'$.
\end{enumerate}
To construct such a map $H_3$, one may mimic the construction from Lemma~\ref{lem:tuck}.  See Figure~\ref{fig:knaster ex disk2} for an illustration of the map $H_3$ in the case $n=2$.

\begin{figure}
\begin{center}
\includegraphics[width=4in]{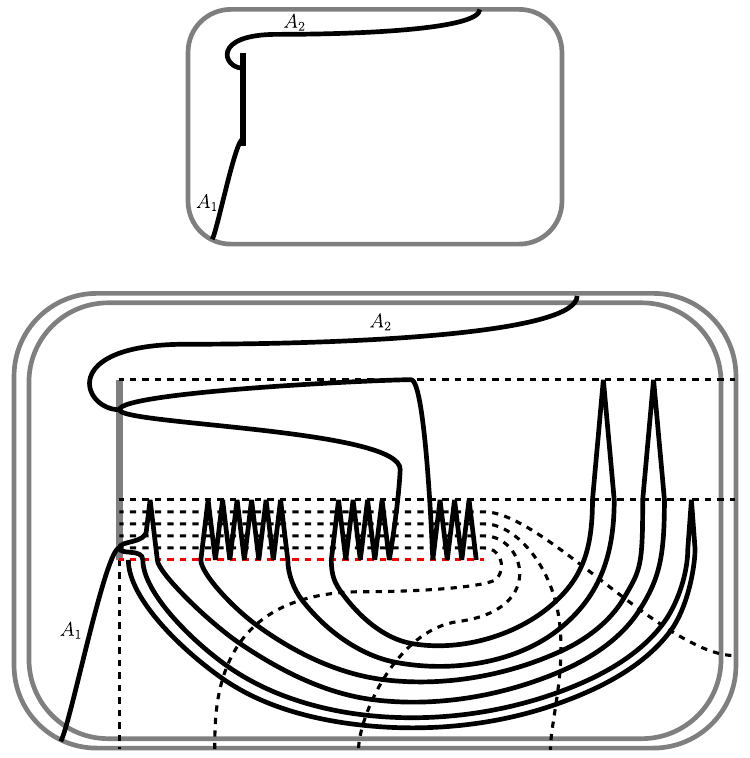}
\end{center}

\caption{Illustration, for the case $n=2$, of the map $H_3 \colon \mathbb{D} \to \mathbb{D}$, which is the identity on the arcs $A_1$ and $A_2$ and on the outer annulus $\mathbb{D} \smallsetminus \mathbb{D}'$, and maps the vertical segment $J$ in the top picture to the solid black arc in the bottom picture.}
\label{fig:knaster ex disk2}
\end{figure}

Let $H = H_1 \circ H_2 \circ H_3$.  Since $H_1$ and $H_2$ are near-homeomorphisms and $H_3$ is a homeomorphism, $H$ is a near-homeomorphism as well.  Therefore, by \cite{brown1960}, the space $\mathbb{D}^\infty = \varprojlim \left\langle \mathbb{D},H \right\rangle$ is a closed topological disk.  Moreover, for each $i \in \{1,\ldots,n\}$, the restriction $H {\restriction}_{A_i}$ is a near-homeomorphism of $A_i$ to itself, hence the space $A_i^\infty = \varprojlim \left\langle A_i,H {\restriction}_{A_i} \right\rangle$ is an arc contained in $\mathbb{D}^\infty$.

Since $H_1$ and $H_2$ are each the identity on $J$, by property (2) of $H_3$, we have that $J$ is invariant under $H$, and $H {\restriction}_J = \tau \circ f \circ \tau^{-1}$.  This means that $K^\infty = \varprojlim \left\langle J,H {\restriction}_J \right\rangle$ is a subspace of $\mathbb{D}^\infty$ which is homeomorphic to $K' = \varprojlim \left\langle I,f \right\rangle \approx K$, and an explicit homeomorphism $\tau^\infty \colon K' \to K^\infty$ is given by $\tau^\infty(\langle p_1,p_2,\ldots \rangle) = \langle \tau(p_1),\tau(p_2),\ldots \rangle$.  Observe that for each $i \in \{1,\ldots,n\}$, $A_i^\infty \cap K^\infty = \{\tau^\infty(\mathbf{z_i'})\}$; thus, $\tau^\infty(\mathbf{z_i'})$ is an accessible point of $K^\infty$.

The shift homeomorphism $\widehat{H}$ on $\mathbb{D}^\infty$ induced by $H$ is given by
\[ \widehat{H}(\langle q_1,q_2,q_3,\ldots \rangle) = \langle H(q_1),H(q_2),H(q_3),\ldots \rangle = \langle H(q_1),q_1,q_2,\ldots \rangle .\]
Clearly $K^\infty$ is invariant under $\widehat{H}$, and when restricted to $K^\infty$, $\widehat{H}$ is conjugate (via $\tau^\infty$) to the shift homeomorphism $\widehat{f}$ on $K'$ induced by $f$:
\begin{align*}
\widehat{H} \circ \tau^\infty(\langle p_1,p_2,\ldots \rangle) &= \langle H \circ \tau(p_1),H \circ \tau(p_2),\ldots \rangle) \\
&= \langle \tau \circ f(p_1),\tau \circ f(p_2),\ldots \rangle) \\
&= \tau^\infty \circ \widehat{f}(\langle p_1,p_2,\ldots \rangle) .
\end{align*}

Notice that, because the boundary of $\mathbb{D}$ is fully invariant under $H$, the interior of the closed topological disk $\mathbb{D}^\infty$ is $\intr(\mathbb{D}^\infty) = \varprojlim \left\langle \intr(\mathbb{D}),H {\restriction}_{\intr(\mathbb{D})} \right\rangle$, which is homeomorphic to $\mathbb{R}^2$.  In this way, we view $\widehat{H} {\restriction}_{\intr(\mathbb{D}^\infty)}$ as a plane homeomorphism.  To prove that $K^\infty$ is the global attractor of this plane homeomorphism, suppose that $P^\infty$ is any compact set contained in $\intr(\mathbb{D}^\infty)$.  Then for any $j \in \mathbb{N}$, the $j$-th coordinate projection of $P^\infty$ in $\mathbb{D}$ is a compact set $P_j$ which is contained in $\intr(\mathbb{D})$.  Therefore, there exists $N \in \mathbb{N}$ such that $H^N(P_j) \subseteq J$.  It follows that for any $\varepsilon > 0$, there exists $N \in \mathbb{N}$ such that $\widehat{H}^k(P^\infty)$ is within $\varepsilon$ from $K^\infty$ for all $k \geq N$.  This implies that $K^\infty$ is the global attractor of $\widehat{H} {\restriction}_{\intr(\mathbb{D}^\infty)}$.
\end{proof}

We remark that a similar construction as in Example~\ref{ex:knaster attractor} may be carried out for any Knaster continuum of the form $\varprojlim \left\langle I,T_m \right\rangle$, for $m \in \mathbb{N}$, $m \geq 2$.

\section{Discussion and questions}
\label{sec:discussion}

Propositions~\ref{prop:embed n pts accessible} and \ref{prop:embed ctble set accessible} may be used to produce plane embeddings of arc-like continua with certain points made accessible, but they do not offer any control over which other points might be made accessible as well.  However, in particular examples, for instance with Knaster continua, it seems evident that with careful choices of embeddings $\Phi$ of $I$ in the plane, playing the role of the embedding $\Phi$ constructed in Lemma~\ref{lem:tuck} and used in the proofs of Propositions~\ref{prop:embed n pts accessible} and \ref{prop:embed ctble set accessible}, one may construct embeddings in which exactly $n$ composants are fully accessible, and no other points are accessible.  See Figure~\ref{fig:knaster alt tuck} for an illustration of one such alternative embedding $\Phi$.  We leave this open as a question:

\begin{figure}
\begin{center}
\includegraphics{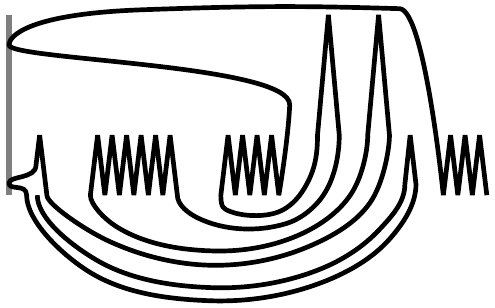}
\end{center}

\caption{An example of an alternative embedding $\Phi$ of $I$ into $\mathbb{H}$ which, if used to construct an embedding of the Knaster continuum $K' = \protect\varprojlim \left\langle I,f \right\rangle$ (where $f$ is as in Example~\ref{ex:knaster attractor} above), may limit which composants are accessible.}
\label{fig:knaster alt tuck}
\end{figure}

\begin{question}
Given any $n$ composants of a Knaster continuum, does there exist a plane embedding in which these $n$ composants are fully accessible, and no other points are accessible?
\end{question}

It would be interesting to know if the construction in Example~\ref{ex:knaster attractor} could be extended to involve countably infinitely many composants:

\begin{question}
Does there exist a plane embedding $\Omega$ of a Knaster continuum $K$ such that countably infinitely many composants of $\Omega(K)$ are accessible, and such that $\Omega(K)$ is the global attractor of a plane homeomorphism $H \colon \mathbb{R}^2 \to \mathbb{R}^2$?
\end{question}

Our application of Propositions~\ref{prop:embed n pts accessible} and \ref{prop:embed ctble set accessible} in this paper is limited to Knaster continua.  Lewis \cite{lewis1981} proved that for any finite subset $Z$ of the pseudo-arc $P$, there exists a plane embedding $\Omega$ of $P$ such that each point of $\Omega(Z)$ is an accessible point of $\Omega(P)$.  It remains an open question whether analagous results can be proved for arbitrary arc-like continua.

\begin{question}
Is it true that for any arc-like continuum $X$ and any finite (respectively, countably infinite) set $Z \subset X$, there exists an embedding $\Omega \colon X \to \mathbb{R}^2$ for which each point of $\Omega(Z)$ is an accessible point of $\Omega(X)$?
\end{question}

\bibliographystyle{amsplain}
\bibliography{Visors}

\end{document}